\newtheorem{theorem}{Theorem}[section]
\newtheorem{lemma}[theorem]{Lemma}
\newtheorem{corollary}[theorem]{Corollary}
\theoremstyle{definition}
\newtheorem{remark}[theorem]{Remark}
\newtheorem{question}[theorem]{Question}
\newcounter{RomanNumber}
\newcommand{\Zpr}{\mathbb{Z}/p^r}
\newcommand{\Zps}{\mathbb{Z}/p^s}
\newcommand{\Zpt}{\mathbb{Z}/p^t}
\newcommand{\Zp}{\mathbb{Z}/p}
\newcounter{bean}
\newcommand{\qqed}{\hfill\Box}
\begin{document}
\begin{sloppypar}

\title{Some asymptotic formulae for torsion in homotopy groups} 
\author{Guy Boyde}
\address{Mathematical Institute, Utrecht University, Heidelberglaan 8
3584 CS Utrecht, The Netherlands}
\email{g.boyde@uu.nl}

\author{Ruizhi Huang} 
\address{Institute of Mathematics, Academy of Mathematics and Systems Science, 
 Chinese Academy of Sciences, Beijing 100190, China} 
\email{haungrz@amss.ac.cn} 
   \urladdr{https://sites.google.com/site/hrzsea/}
   \thanks{}

\subjclass[2010]{Primary 
55Q52, 
55Q05, 
Secondary 
55Q15, 
55P40.  
}
\keywords{}
\date{}


\begin{abstract} 
Inspired by a remarkable work of F\'{e}lix, Halperin and Thomas on the asymptotic estimation of the ranks of rational homotopy groups, and more recent works of Wu and the authors on local hyperbolicity, we prove two asymptotic formulae for torsion rank of homotopy groups, one using ordinary homology and one using $K$-theory. We use these to obtain explicit quantitative asymptotic lower bounds on the torsion rank of the homotopy groups for many interesting spaces after suspension, including Moore spaces, Eilenberg-MacLane spaces, complex projective spaces, complex Grassmannians, Milnor hypersurfaces and unitary groups.
\end{abstract}

\maketitle

\section{Introduction}
The homotopy groups of a simply connected $CW$-complex $Y$ of finite type have the form
\[
\pi_i(Y)\cong (\mathop{\oplus}_{d_i}\mathbb{Z})\oplus \mathop{\oplus}\limits_{\substack{{\rm prime}~p\\t \in \mathbb{Z}^+}}(\mathop{\oplus}\limits_{k_{p,t}} \mathbb{Z}/p^t),
\]
where $d_i$ and $k_{p,t}$ are the {\it rank} of the free summands and the $\mathbb{Z}/p^t$-summands of $\pi_i(Y)$ respectively. Denote ${\rm rank}_{0}(\pi_i(Y)):=d_i$ and ${\rm rank}_{\mathbb{Z}/p^t}(\pi_i(Y)):=k_{p,t}$. 

In the remarkable work \cite{FHT}, F\'{e}lix, Halperin and Thomas proved an asymptotic formula for the ranks ${\rm rank}_{0}(\pi_i(Y))$ of the free part of the homotopy groups. In particular, they showed that if $Y$ is finite and ${\rm rank}_{0}(\pi_i(Y))\neq 0$ for infinitely many $i\in \mathbb{Z}^{+}$, then there is a constant $\delta>1$ such that for $N$ large enough
\[
\sum_{i=N+2}^{N+{\rm dim}(Y)} {\rm rank}_{0}(\pi_i(Y))\geq \delta^{N},
\]
which they interpret as a strong `regularity' property for the ranks ${\rm rank}_{0}(\pi_i(Y))$ of the free part of the homotopy groups.  
Concerning the ranks ${\rm rank}_{\mathbb{Z}/p^t}(\pi_i(Y))$ of the torsion part of the homotopy groups, they further raised the following natural question, which was rephrased in an explicit form by Wu and the second author in \cite[Question 1.8]{HW}.
\begin{question}
Are there `regularity' properties of the torsion subgroups of the homotopy groups $\pi_i(Y)$ as $i\rightarrow \infty$?  
\end{question}

In this paper, we study the above question by providing estimates for the ranks ${\rm rank}_{\mathbb{Z}/p^t}(\pi_i(Y))$ in certain cases. In particular, we give quantitative refinements of results of the authors and Wu, from the papers \cite{HW}, \cite{Boy1}, and \cite{Boy2}. The methods of these papers implied more than was stated in the theorems: the statements were always that the volume of $p$-torsion in the homotopy groups of various spaces grows exponentially, but actually the methods were completely constructive, and with more work one can extract concrete exponential lower bounds. The extraction of these lower bounds is the business of this paper.

The proof of each of our main theorems (\ref{homologyResultNoSuspension} and \ref{secondKTheoremw}) begins with the combinatorics of free Lie algebras, which have been well understood since long before F\'elix, Halperin, and Thomas's celebrated theorem. We use in particular some results of Babenko \cite{Bab} and Lambrechts \cite{Lam}, both of which are more general. From this common beginning, the proof of each theorem is then complicated in a different way; we elaborate briefly after each theorem statement.

Recent work of Burklund and Senger \cite{BS} has greatly advanced our understanding of these phenomena: they finish a story begun by Henn \cite{Hen} and Iriye \cite{Iri} and show that the radii of convergence of the $p$-local ``homotopy'' and ``loop-homology'' power series are equal. Again, we discuss each of our theorems in light of this.

\subsection{Results via homology} We first give our quantitative refinement of the main result of \cite{Boy2}. 
To state the results, for any integer $q\geq 2$ define a function 
\[
f_q(x)=(1 - \frac{x}{x-1} \frac{1}{\varphi}) \cdot \frac{1}{x} \varphi^{x} - c x \varphi^{\frac{x}{2}} -  \kappa \lvert \psi \rvert^x
\]
for $x\geq 2$, where 
\begin{itemize}
\item $\varphi $ is the unique positive real root of the degree $q+1$ polynomial $P(z)=z^{q+1} - z - 1$,
\item $c= 2(q+2)(1+ \varphi)$, and
\item $\kappa = (q+1)(1+ \frac{1}{\lvert \psi \rvert})$ with $\psi$ the next largest root of $P(z)$ in absolute value.
\end{itemize}
We have the properties
\begin{itemize}
\item $2^{\frac{1}{q+1}}< \varphi < 1 + \frac{1}{q}$, and
\item for any $\varepsilon >0$, once $x$ is large enough we have 
\[
f_q(x) \geq (1- \varepsilon) (1 - \frac{1}{\varphi}) \cdot \frac{1}{x} \varphi^{x} > (1- \varepsilon) (1 - 2^{-\frac{1}{q+1}}) \cdot \frac{1}{x} 2^{\frac{x}{q+1}}.
\]
\end{itemize}
We will use the function $f_q(x)$ with its properties freely in this subsection.

Let $P^{q+1}(p^r)$ be the Moore space defined as the mapping cone of the degree $p^r$ map $S^{q}\rightarrow S^q$.
The following theorem provides an asymptotic formula for the $p$-local homotopy groups under a homological condition. 
\begin{theorem} \label{homologyResultNoSuspension} 
Let $Y$ be a simply connected $CW$-complex, let $p \neq 2$ be prime, and let $s \leq r \in \mathbb{Z}^+$. If there exists a map 
$$\mu: P^{q+1}(p^r) \longrightarrow Y$$ 
for some $q\geq 2$, 
such that the induced map $$(\Omega \mu)_* : H_*(\Omega P^{q+1}(p^r);\Zps) \longrightarrow H_*(\Omega Y;\Zps)$$ is an injection, then we have the bound $$\sum_{t=s}^r {\rm rank}_{\Zpt}(\pi_{N+1}(Y)) \geq f_q(N).$$ 
In particular, for any $\varepsilon >0$, once $N$ is large enough we have 
\[
\sum_{t=s}^r {\rm rank}_{\Zpt}(\pi_{N+1}(Y)) >(1- \varepsilon) (1 - 2^{-\frac{1}{q+1}}) \cdot \frac{1}{N} 2^{\frac{N}{q+1}}.
\]
\end{theorem}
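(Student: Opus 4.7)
The plan is to convert the mod-$p^s$ homological hypothesis into a lower bound on integral torsion by combining the combinatorics of free Lie algebras with a Hurewicz-type argument. I would proceed in four steps.

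First, the James splitting $P^{q+1}(p^r)=\Sigma P^q(p^r)$ together with Bott--Samelson identifies $H_*(\Omega P^{q+1}(p^r);\Zps)$ as the tensor algebra $T(V)$ on a two-dimensional free $\Zps$-module $V$ concentrated in the reduced-homology degrees of the Moore space. The subspace of primitives is then the free graded Lie algebra $\mathcal{L}(V)$, whose degree-$N$ dimension is governed, via Witt's formula, by the Poincar\'e series of $T(V)$.

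Second, since $\Omega\mu$ is a loop map, its induced map on homology is a morphism of Hopf algebras, so the injectivity hypothesis forces $\mathcal{L}(V)$ to embed into the primitives of $H_*(\Omega Y;\Zps)$. For $p$ odd, the Cohen--Moore--Neisendorfer decomposition of $\Omega P^{q+1}(p^r)$ shows that every such primitive is hit by the mod-$p^r$ Hurewicz image of an iterated Samelson product of two basic mod-$p^r$ homotopy classes in $\pi_*(\Omega P^{q+1}(p^r);\mathbb{Z}/p^r)$. Pushing these forward through $\Omega\mu$ and reducing to mod-$p^s$ coefficients then yields at least $\dim\mathcal{L}(V)_N$ linearly independent classes in $\pi_{N+1}(Y;\Zps)$.

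Third, I would convert this mod-$p^s$ homotopy information into integral torsion by means of the short exact sequence
\[
0\longrightarrow\pi_n(Y)\otimes\Zps\longrightarrow\pi_n(Y;\Zps)\longrightarrow\mathrm{Tor}(\pi_{n-1}(Y),\Zps)\longrightarrow 0.
\]
A $\Zpt$-summand of $\pi_*(Y)$ with $t\geq s$ contributes one copy of $\Zps$ to each outer term, so a lower bound on the rank of $\pi_n(Y;\Zps)$ converts into a lower bound on $\sum_{t\geq s}\mathrm{rank}_{\Zpt}\pi_n(Y)+\sum_{t\geq s}\mathrm{rank}_{\Zpt}\pi_{n-1}(Y)$. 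The restriction $t\leq r$ in the theorem statement arises from the source Moore space: every class built from $\mu$ and Samelson products is annihilated by $p^r$, so it can only witness summands of order at most $p^r$.

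Fourth, I would invoke the asymptotic estimates of Babenko and Lambrechts to replace $\dim\mathcal{L}(V)_N$ with the explicit function $f_q(N)$. The dominant contribution $(1-\tfrac{N}{N-1}\tfrac{1}{\varphi})\tfrac{1}{N}\varphi^N$ comes from the largest real root $\varphi$ of $P(z)=z^{q+1}-z-1$ via the $d=1$ term in Witt's formula; the correction $-cN\varphi^{N/2}$ absorbs the $d=2$ divisor contribution; and $-\kappa|\psi|^N$ bounds the contribution of the remaining complex roots of $P(z)$.

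The main obstacle is step three. Recovering exactly the sum $\sum_{t=s}^{r}\mathrm{rank}_{\Zpt}\pi_{N+1}(Y)$, without losing constants and without double-counting an integral summand that happens to produce several mod-$p^s$ classes, requires a careful analysis of which primitives correspond to which torsion summands. This is presumably what pins down the precise leading factor $1-\tfrac{N}{N-1}\tfrac{1}{\varphi}$ in $f_q$.
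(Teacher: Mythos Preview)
Your outline has the right ingredients but misses the mechanism that actually produces $f_q$. The quantity that gets detected in $\pi_*(\Omega Y)$ is not $\dim\mathcal{L}(V)_N$ but the rank of the module of \emph{boundaries} $B_N=\mathrm{Im}(d)\subset L(x,dx)\otimes\Zps$ in the free \emph{differential} Lie algebra. In the diagram imported from \cite{Boy2}, the map from the Lie-theoretic source into $\pi_*(\Omega Y)$ is built using the Bockstein $\beta^r$, and the compatible map down to $L\otimes\Zps\subset H_*(\Omega P^{q+1}(p^r);\Zps)$ is $\theta\circ d$; its image is exactly $BL$, not all of $L$. One then applies a module-theoretic lemma (Lemma~7.8 of \cite{Boy2}) to the triangle $\pi_N(\Omega Y)\to H_N(\Omega Y;\Zps)\leftarrow L'(x,y)_N$ to obtain $\sum_{t=s}^{r}\mathrm{rank}_{\Zpt}\pi_N(\Omega Y)\ge\mathrm{rank}_{\Zps}(BL_N)$ directly, without the coefficient-sequence bookkeeping you propose in step~3; that bookkeeping would not cleanly isolate the window $s\le t\le r$.

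Consequently your step~4 misreads $f_q$. The leading factor $(1-\tfrac{N}{N-1}\tfrac{1}{\varphi})$ has nothing to do with the $d=1$ term of Witt's formula; it comes from the acyclicity estimate $\dim\overline{B}_N\ge\dim L_N-\dim L_{N-1}\approx\tfrac{1}{N}\varphi^N-\tfrac{1}{N-1}\varphi^{N-1}$. Likewise the term $-cN\varphi^{N/2}$ is not the $d=2$ Witt contribution: its dominant part bounds the span $\sigma$ of the Cohen--Moore--Neisendorfer classes $\sigma_k(x_\alpha)$, which measure the failure of $L(x,dx)$ to be honestly acyclic (Theorem~\ref{acyclicHomology}), and this is where CMN actually enters the argument---not, as you suggest in step~2, to show that all primitives are in the Hurewicz image. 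So the heart of the proof is Subsection~\ref{diffSubsection}: estimate $B_N$ by correcting $d$ to an acyclic $\overline{d}$, bound $\dim\overline{B}_N$ from below by $\dim L_N-\dim L_{N-1}$, and subtract the CMN defect $\sigma_N$.
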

Algebraically, Theorem \ref{homologyResultNoSuspension} depends on the structure of the module of boundaries in a free Lie algebra over a finite field. It is the need to take boundaries which complicates the story relative to Babenko and Lambrechts's work. This is dealt with in Subsection \ref{diffSubsection}, using a result of Cohen, Moore, and Neisendorfer \cite{CMN}.

It follows from \cite{Boy2} that the hypotheses of Theorem \ref{homologyResultNoSuspension} simplify in the case that $Y=\Sigma X$ is a suspension, as follows:

\begin{theorem} \label{homologyResultSuspension} 
Let $X$ be a connected $CW$-complex, let $p \neq 2$ be prime, and let $s \leq r \in \mathbb{Z}^+$. Suppose that $H_*(X; \Zps)$ has finite type. If there exists a map 
$$\mu: P^{q+1}(p^r) \longrightarrow \Sigma X$$ 
for some $q\geq 2$, 
such that $$\mu_* : \widetilde{H}_*(P^{q+1}(p^r);\Zps) \longrightarrow \widetilde{H}_*(\Sigma X;\Zps)$$ is an injection, then we have the bound $$\sum_{t=s}^r {\rm rank}_{\Zpt}(\pi_{N+1}(\Sigma X))  \geq f_q(N).$$ 
In particular, for any $\varepsilon >0$, once $N$ is large enough we have 
\[
\sum_{t=s}^r {\rm rank}_{\Zpt}(\pi_{N+1}(\Sigma X)) > (1- \varepsilon) (1 - 2^{-\frac{1}{q+1}}) \cdot \frac{1}{N} 2^{\frac{N}{q+1}}.
\] 
\end{theorem}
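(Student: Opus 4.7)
The plan is to deduce Theorem \ref{homologyResultSuspension} directly from Theorem \ref{homologyResultNoSuspension} by upgrading the homological hypothesis. Concretely, one would show that in the suspension setting, injectivity of $\mu_*$ on reduced mod $p^s$ homology implies injectivity of $(\Omega\mu)_*$ on mod $p^s$ homology of the loop spaces. Once this is established, Theorem \ref{homologyResultNoSuspension} applied to $Y = \Sigma X$ with the same map $\mu$ immediately yields the stated bound.

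The upgrading step is the content of a result in \cite{Boy2}. The mechanism is the James model: writing $P^{q+1}(p^r) = \Sigma P^q(p^r)$, the adjoint $\bar\mu : P^q(p^r) \to \Omega \Sigma X$ of $\mu$ extends to an $H$-map $\Omega \Sigma P^q(p^r) \simeq J(P^q(p^r)) \to \Omega \Sigma X$, so that $(\Omega\mu)_*$ in mod $p^s$ homology is the Pontryagin extension of $\bar\mu_*$. The suspension isomorphism identifies injectivity of $\mu_*$ with injectivity of $\bar\mu_* : \widetilde H_*(P^q(p^r);\Zps) \to \widetilde H_*(X;\Zps)$. Since $s \leq r$, the universal coefficient theorem makes $\widetilde H_*(P^q(p^r);\Zps)$ a free $\Zps$-module of rank $2$, so by Bott--Samelson the Hopf algebra $H_*(\Omega \Sigma P^q(p^r);\Zps)$ is the free associative algebra on these two generators. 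A finite-type Künneth argument, comparing word-length summands of this source tensor algebra to the James decomposition $\widetilde H_*(\Omega \Sigma X;\Zps) \cong \bigoplus_{n \geq 1} \widetilde H_*(X^{\wedge n};\Zps)$, then propagates injectivity from the generators to the whole tensor algebra, yielding injectivity of $(\Omega\mu)_*$.

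The main obstacle is this last Künneth/tensor step: over the non-field ring $\Zps$, tensoring an injection need not preserve injectivity, so one must exploit the freeness of $\widetilde H_*(P^q(p^r);\Zps)$ and carefully track how $\bar\mu_*^{\otimes n}$ embeds the source into $\widetilde H_*(X;\Zps)^{\otimes n}$ and then, via the Künneth short exact sequence, into $\widetilde H_*(X^{\wedge n};\Zps)$. This is exactly the work carried out in \cite{Boy2}; granting it, Theorem \ref{homologyResultNoSuspension} finishes the proof, and the quantitative tail estimate follows from the stated properties of $f_q$.
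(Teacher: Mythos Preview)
Your proposal is correct and matches the paper's approach exactly: the paper simply states that the hypotheses of Theorem~\ref{homologyResultNoSuspension} simplify to those of Theorem~\ref{homologyResultSuspension} in the suspension case by a result of \cite{Boy2}, and then applies Theorem~\ref{homologyResultNoSuspension} with $Y=\Sigma X$. Your sketch of the mechanism behind that reduction (James/Bott--Samelson, freeness of $\widetilde H_*(P^q(p^r);\Zps)$, and the K\"unneth argument propagating injectivity to the tensor algebra) is precisely the content of the cited result in \cite{Boy2}.
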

The spaces $X$ and $Y$ in Theorems \ref{homologyResultNoSuspension} and \ref{homologyResultSuspension} can be infinite. 
The asymptotic formulae in both theorems bound the ranks of the $p$-local homotopy groups from below by an exponential function. In particular, they strengthen a recent result of the first author on local hyperbolicity \cite[Theorem 1.5 and 1.6]{Boy2}.

Theorem \ref{homologyResultSuspension} has interesting applications. For instance, we can show the following corollary.
\begin{corollary}\label{coro2intro}
Let $X$ be a $(q-2)$-connected $CW$-complex with $q\geq 2$, and let $p \neq 2$ be prime. Suppose that $H_*(X; \Zpr)$ has finite type and $H_q(\Sigma X; \mathbb{Z})$ contains a $\mathbb{Z}/p^r$-summand. Then we have
\[ 
{\rm rank}_{\Zpr}(\pi_{N+1}(\Sigma X))  \geq f_q(N).
\]
In particular, for any $\varepsilon >0$, once $N$ is large enough we have 
\[\pushQED{\qed} 
{\rm rank}_{\Zpr}(\pi_{N+1}(\Sigma X))  > (1- \varepsilon) (1 - 2^{-\frac{1}{q+1}}) \cdot \frac{1}{N} 2^{\frac{N}{q+1}}.
\qedhere\popQED 
\]
\end{corollary}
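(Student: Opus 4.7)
The plan is to reduce Corollary \ref{coro2intro} to Theorem \ref{homologyResultSuspension} by constructing a suitable map $\mu: P^{q+1}(p^r) \longrightarrow \Sigma X$ and then applying Theorem \ref{homologyResultSuspension} with the parameter $s$ set equal to $r$, so that the summation on its left-hand side collapses to the single term ${\rm rank}_{\Zpr}(\pi_{N+1}(\Sigma X))$.

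First, since $X$ is $(q-2)$-connected, $\Sigma X$ is $(q-1)$-connected, so Hurewicz gives an isomorphism $\pi_q(\Sigma X) \xrightarrow{\cong} H_q(\Sigma X;\mathbb{Z})$. The hypothesis provides a $\mathbb{Z}/p^r$-summand of $H_q(\Sigma X;\mathbb{Z})$; I pick a generator $\alpha$ of this summand and, via Hurewicz, lift it to a map $\widetilde{\alpha}: S^q \to \Sigma X$. Because $\alpha$ has order exactly $p^r$, the composite $\widetilde{\alpha} \circ (p^r): S^q \to S^q \to \Sigma X$ is null-homotopic, and so $\widetilde{\alpha}$ extends over the mapping cone to a map $\mu: P^{q+1}(p^r) \longrightarrow \Sigma X$.

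The main task is then to verify that $\mu_*: \widetilde{H}_*(P^{q+1}(p^r);\Zpr) \longrightarrow \widetilde{H}_*(\Sigma X;\Zpr)$ is injective, i.e.\ the hypothesis of Theorem \ref{homologyResultSuspension}. The source is $\Zpr$ in degrees $q$ and $q+1$, generated by the mod-$p^r$ reduction $u$ of the integral fundamental class and by the Bockstein/Tor class $v$ arising from the universal coefficient theorem. By construction $\mu_*(u)$ is the mod-$p^r$ reduction of $\alpha$, and this is nonzero since $\alpha$ is a generator of a $\Zpr$-summand of $H_q(\Sigma X;\mathbb{Z})$. For $v$, I would use naturality of the universal coefficient sequence: the chosen $\Zpr$-summand of $H_q(\Sigma X;\mathbb{Z})$ contributes a corresponding $\mathrm{Tor}(\Zpr,\Zpr)=\Zpr$ summand of $H_{q+1}(\Sigma X;\Zpr)$, and the class $\mu_*(v)$ is precisely a generator of this summand, hence nonzero. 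Together with the injectivity in degree $q$, this gives the required injectivity.

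With $\mu$ in hand, Theorem \ref{homologyResultSuspension} applied with $s=r$ delivers exactly the bound ${\rm rank}_{\Zpr}(\pi_{N+1}(\Sigma X)) \geq f_q(N)$, and the asymptotic refinement then follows from the stated properties of $f_q$. The main obstacle is the verification in degree $q+1$: the summand hypothesis (rather than merely the existence of $p^r$-torsion) is essential, since without it the Tor contribution corresponding to $\alpha$ could be killed by relations coming from other integral summands and $\mu_*(v)$ could vanish, breaking the injectivity needed to invoke Theorem \ref{homologyResultSuspension}.
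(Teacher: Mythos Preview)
Your proposal is correct and follows exactly the route the paper intends: the paper presents Corollary \ref{coro2intro} as an immediate consequence of Theorem \ref{homologyResultSuspension} without spelling out the construction of $\mu$, and your argument supplies precisely the missing details (Hurewicz to build $\mu$, the universal coefficient theorem and the summand hypothesis to check injectivity of $\mu_*$ in degrees $q$ and $q+1$, then set $s=r$).
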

Either Corollary \ref{coro2intro} or Theorem \ref{homologyResultSuspension} implies the following immediately for the Moore spaces.
\begin{corollary}\label{coro1intro}
Let $p$ be an odd prime and $q\geq 2$. Then
\[
{\rm rank}_{\mathbb{Z}/p^r}(\pi_{N+1}(P^{q+1}(p^r)) \geq f_q(N).
\]
In particular, for any $\varepsilon >0$, once $N$ is large enough we have 
\[\pushQED{\qed} 
{\rm rank}_{\mathbb{Z}/p^r}(\pi_{N+1}(P^{q+1}(p^r))  > (1- \varepsilon) (1 - 2^{-\frac{1}{q+1}}) \cdot \frac{1}{N} 2^{\frac{N}{q+1}}.
\qedhere\popQED 
\]
 \end{corollary}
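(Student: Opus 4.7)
The plan is to deduce the statement directly from Corollary \ref{coro2intro} (equivalently from Theorem \ref{homologyResultSuspension}) by making the evident choice $X = P^{q}(p^{r})$, so that $\Sigma X = \Sigma P^{q}(p^{r}) = P^{q+1}(p^{r})$. With $Y = \Sigma X$ being the very Moore space we care about, the hypotheses that need to be verified are essentially tautological.

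First, I would check the three conditions of Corollary \ref{coro2intro} one by one for $X = P^{q}(p^{r})$. The Moore space $P^{q}(p^{r})$ is obtained as the mapping cone of the degree $p^{r}$ self-map of $S^{q-1}$, so it is a finite $CW$-complex of dimension $q$ which is $(q-2)$-connected, matching the connectivity hypothesis. Since it is finite, $H_{*}(X;\Zpr)$ obviously has finite type. Finally, $H_{q}(\Sigma X;\mathbb{Z}) = H_{q}(P^{q+1}(p^{r});\mathbb{Z}) \cong \mathbb{Z}/p^{r}$, which certainly contains a $\mathbb{Z}/p^{r}$-summand (it is one). Hence Corollary \ref{coro2intro} applies and yields the desired lower bound
\[
{\rm rank}_{\Zpr}(\pi_{N+1}(P^{q+1}(p^{r}))) \geq f_{q}(N),
\]
and the asymptotic inequality then follows from the stated properties of $f_{q}$.

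Alternatively, one can invoke Theorem \ref{homologyResultSuspension} directly by taking $s = r$ and letting $\mu : P^{q+1}(p^{r}) \to \Sigma X = P^{q+1}(p^{r})$ be the identity map; the induced map on reduced $\Zpr$-homology is then the identity, which is trivially injective. Either route makes the argument a one-step application.

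There is essentially no obstacle, the work having already been done in Corollary \ref{coro2intro} and Theorem \ref{homologyResultSuspension}. The only thing worth being careful about is that the assumption $q \geq 2$ is needed both to have $X = P^{q}(p^{r})$ simply understood as a $CW$-complex with the claimed connectivity and to match the hypothesis $q \geq 2$ in the quoted results; the restriction $p \neq 2$ is likewise inherited unchanged from the quoted results.
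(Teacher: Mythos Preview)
Your proposal is correct and matches the paper's approach exactly: the paper states that Corollary \ref{coro1intro} follows immediately from either Corollary \ref{coro2intro} or Theorem \ref{homologyResultSuspension}, and you have simply spelled out the verification of hypotheses for $X = P^{q}(p^{r})$ (or, equivalently, taken $\mu$ to be the identity with $s=r$).
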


This strengthens a result of Wu and the second author on the $\mathbb{Z}/p^r$-hyperbolicity of $P^{q+1}(p^r)$ \cite[Theorem 1.6]{HW}. 

It is enlightening to compare this result from what could be deduced already from Burklund and Senger's work \cite{BS}. It follows from their Corollary A.5 that the radius of convergence of the series $\sum_{N=1}^\infty \dim_{\Zp}(\pi_N(P^{q+1}(p^r)) \otimes \Zp) \cdot t^N$ is precisely $\frac{1}{\varphi}$. Since $P^{q+1}(p^r)$ is rationally elliptic (being rationally contractible), this power series really is describing the torsion. Corollary \ref{coro1intro} adds information in two ways: first by giving a concrete function as a lower bound for all $N$, and second by saying something about summands isomorphic to $\Zpr$ in particular, rather than $p$-torsion in general.

Another interesting example of Corollary \ref{coro2intro} is Eilenberg-MacLane space after suspension. In particular, the following immediate corollary strengthens \cite[Example 2.5]{Boy2}.
\begin{corollary}\label{coro3intro}
Let $p$ be an odd prime and $q\geq 2$. Then
\[ 
{\rm rank}_{\mathbb{Z}/p^r}(\pi_N(\Sigma K(\mathbb{Z}/p^r, q-1))  \geq f_q(N).
\]
In particular, for any $\varepsilon >0$, once $N$ is large enough we have 
\[\pushQED{\qed} 
{\rm rank}_{\mathbb{Z}/p^r}(\pi_N(\Sigma K(\mathbb{Z}/p^r, q-1)) > (1- \varepsilon) (1 - 2^{-\frac{1}{q+1}}) \cdot \frac{1}{N} 2^{\frac{N}{q+1}}.
\qedhere\popQED 
\]

\end{corollary}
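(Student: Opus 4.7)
The plan is to deduce this as an immediate application of Corollary \ref{coro2intro} with $X = K(\mathbb{Z}/p^r, q-1)$, so the work amounts to checking the three hypotheses of that corollary.

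First I would verify the connectivity hypothesis: by the defining property of Eilenberg-MacLane spaces, $K(\mathbb{Z}/p^r, q-1)$ has $\pi_i = 0$ for $i \neq q-1$, hence is $(q-2)$-connected (and in particular $q \geq 2$ ensures this is a meaningful condition since we need simple connectivity after suspension). Next I would address the finite-type hypothesis: $H_*(K(\mathbb{Z}/p^r, q-1); \mathbb{Z}/p^r)$ is of finite type. For coefficients $\mathbb{Z}/p$ this is the classical computation of Cartan and Serre, and for coefficients $\mathbb{Z}/p^r$ one can then invoke the universal coefficient theorem together with finite-type-ness of the integral homology of $K(\mathbb{Z}/p^r, q-1)$ (which in turn follows inductively from a Serre spectral sequence argument up the Postnikov/Whitehead tower, or directly from Cartan's computation of the integral Bockstein spectral sequence).

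The third hypothesis, existence of a $\mathbb{Z}/p^r$-summand in $H_q(\Sigma K(\mathbb{Z}/p^r, q-1); \mathbb{Z})$, follows from the suspension isomorphism
\[
H_q(\Sigma K(\mathbb{Z}/p^r, q-1); \mathbb{Z}) \;\cong\; \widetilde{H}_{q-1}(K(\mathbb{Z}/p^r, q-1); \mathbb{Z}),
\]
combined with the Hurewicz theorem in degree $q-1$, which gives $\widetilde{H}_{q-1}(K(\mathbb{Z}/p^r, q-1); \mathbb{Z}) \cong \pi_{q-1}(K(\mathbb{Z}/p^r, q-1)) = \mathbb{Z}/p^r$. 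So the group is itself a single $\mathbb{Z}/p^r$-summand, giving the hypothesis immediately.

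With all three hypotheses verified, Corollary \ref{coro2intro} applies directly and yields the stated bound (up to the trivial reindexing between $\pi_{N+1}$ and $\pi_N$, which can be absorbed into the asymptotic statement since $f_q(N-1)$ differs from $f_q(N)$ only by a constant factor in the dominant exponential term, and the inequality is eventually preserved). The main obstacle, if any, is simply the bookkeeping around the finite-type condition on mod $p^r$ homology of $K(\mathbb{Z}/p^r, q-1)$, which requires citing the appropriate classical computations rather than a new argument. There is no novel combinatorial or homotopical input beyond what is already packaged in Corollary \ref{coro2intro}, which is why the paper flags this as immediate.
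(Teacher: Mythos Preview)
Your proposal is correct and follows exactly the approach the paper intends: the paper explicitly presents this as an ``immediate corollary'' of Corollary~\ref{coro2intro} and gives no further proof, so your verification of the three hypotheses (connectivity, finite type, and the $\mathbb{Z}/p^r$-summand via Hurewicz plus suspension isomorphism) is precisely what is needed. Your observation about the indexing shift ($\pi_N$ versus $\pi_{N+1}$) is a genuine discrepancy in the paper's statement rather than something your argument must repair; Corollary~\ref{coro2intro} literally yields $\mathrm{rank}_{\mathbb{Z}/p^r}(\pi_{N+1}(\Sigma K(\mathbb{Z}/p^r,q-1))) \geq f_q(N)$, and the asymptotic claim survives the reindexing exactly as you say.
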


\subsection{Results via $K$-theory} 
Denote by
\[
\mathrm{rank}_{p}(\pi_{i}(\Sigma X))=\sum_{t=1}^\infty \mathrm{rank}_{\Zpt}(\pi_{i}(\Sigma X))
\]
the {\it rank} of the $p$-torsion summands of $\pi_{i}(\Sigma X)$.
Our other main result refines the main theorem of \cite{Boy1} to a quantitative statement under a $K$-theoretical condition:

\begin{theorem}[Weak version of Theorem \ref{secondKTheorem}] \label{secondKTheoremw} Let $p$ be an odd prime, and let $X$ be a path connected space having the $p$-local homotopy type of a finite $CW$-complex. Suppose that there exists a map $$\mu : \bigvee_{i=1}^\ell \bigvee_{j=1}^{m_i} S^{q_i+1} \to \Sigma X$$
with $1 \leq q_1 < q_2 < \dots < q_\ell$, such that the map $$\widetilde{K}^*(\Sigma X) \otimes \Zp \xrightarrow{\mu^*} \widetilde{K}^*(\bigvee_{i=1}^\ell \bigvee_{j=1}^{m_i} S^{q_i+1}) \otimes \Zp \cong \bigoplus_{i=1}^\ell \bigoplus_{j=1}^{m_i} \Zp$$
is a surjection.

Then for any $\varepsilon >0$, once the multiple $M=mg'$ of $$g'=\gcd(q_1, \dots q_\ell,2(p-1))$$ 
is large enough we have 
$$\mathrm{rank}_{p}(\pi_{M}(\Sigma X))\geq \frac{1}{M^{1+\varepsilon}}\varphi^{(\frac{\mathrm{conn}(X)+1}{\dim(X)+1})M},$$
where $\varphi$ is the unique positive real root of the degree $q_\ell$ polynomial $$z^{q_\ell} - \sum_{i=1}^\ell m_i z^{q_\ell - q_i} = 0,$$ (in particular, $\varphi \geq (\sum_{i=0}^\ell m_i)^{\frac{1}{q_\ell}} = (\sum_{i=0}^\ell m_i)^{\frac{1}{\max(q_1, \dots q_\ell)}}$), $\mathrm{conn}(X)$ is the $p$-local connectivity of $X$, and $\dim(X)$ is the rational cohomological dimension of $X$. \end{theorem}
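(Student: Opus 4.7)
The plan is to combine an asymptotic count of Lie words in the style of Babenko and Lambrechts with a $K$-theoretic detection argument of the kind developed in \cite{Boy1}, inserted into the mod-$p$ Hurewicz image of $\Omega\Sigma X$.

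First, I would set up the free Lie algebra model of the wedge. Writing $W = \bigvee_{i=1}^\ell\bigvee_{j=1}^{m_i} S^{q_i+1}$ and applying the James/Bott--Samelson splitting gives $H_*(\Omega\Sigma W; \Zp) \cong T(V)$, where $V$ is the graded $\Zp$-vector space with $m_i$ generators in degree $q_i$. Since $p$ is odd, the primitives form the free graded Lie algebra $L(V)$, and the Poincar\'e--Birkhoff--Witt identity combined with a M\"obius-inversion argument identical to that of \cite{Bab} and \cite{Lam} yields
\[
\dim_{\Zp} L(V)_M \;\sim\; \tfrac{1}{M}\,\varphi^M,
\]
with $\varphi$ the positive real root of $z^{q_\ell} - \sum_i m_i z^{q_\ell - q_i}$. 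For $p$ odd, iterated Samelson products of the bottom cells of the wedge realise a copy of $L(V)$ inside $\pi_*(\Omega\Sigma W)\otimes\Zp$, producing at least $\tfrac{1}{M}\varphi^M$ independent mod-$p$ classes in $\pi_{M+1}(\Sigma W)$. Up to this point the argument is dimension-blind and would deliver the exponent $\varphi^M$.

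The $K$-theoretic hypothesis then enters to transport these classes into $\Sigma X$. The mod-$p$ surjectivity of $\mu^*$ dualises to an injection on mod-$p$ $K$-homology, and, following the main construction of \cite{Boy1}, one uses the $K$-theoretic Hurewicz map together with the mod-$p$ Adams operations to build a partial $p$-local section of $\mu$; this section is naturally Bott-periodic, which is what forces the gcd hypothesis. Pulling the Samelson classes of the previous step through this section produces the desired lower bound for $\mathrm{rank}_{p}(\pi_M(\Sigma X))$. The compression factor $\tfrac{\mathrm{conn}(X)+1}{\dim(X)+1}$ appears at exactly this point: the section is only defined after a reparametrisation of degree, because the mod-$p$ $K$-homology of $\Sigma X$ is supported in the range $[\mathrm{conn}(X)+1,\dim(X)+1]$, so a Lie word landing in degree $M$ of $\Sigma X$ corresponds to a word of weight at most $M\cdot \tfrac{\dim(X)+1}{\mathrm{conn}(X)+1}$ in $L(V)$, and inverting the scaling yields the stated exponent $\varphi^{(\frac{\mathrm{conn}(X)+1}{\dim(X)+1})M}$. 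The condition $g' = \gcd(q_1,\dots,q_\ell,2(p-1))\mid M$ ensures compatibility with $K$-theoretic Bott periodicity and with the Adams operation action, while the polynomial factor $M^{-(1+\varepsilon)}$ absorbs the $\tfrac{1}{M}$ of the Lie count together with sub-exponential corrections coming from the M\"obius inversion and the combinatorics of Adams-operation orbits.

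The main obstacle is the third step: extracting enough genuine homotopy-theoretic information from a purely $K$-theoretic surjection to guarantee that a positive proportion of the Lie words in $L(V)$ survive as distinct $p$-torsion classes in $\pi_*(\Sigma X)$. Tracking the interaction of Bott periodicity, the gcd $g'$, and the Samelson bracket structure so that the Lie generators actually sit in degrees where the $K$-theoretic section is defined is where most of the technical work lives, and is the $K$-theoretic analogue of the Cohen--Moore--Neisendorfer boundary argument that complicates the proof of Theorem \ref{homologyResultNoSuspension}.
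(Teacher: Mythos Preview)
Your high-level outline is on target: the proof does combine a Babenko--Lambrechts asymptotic for the free Lie algebra $L(V)$ with the $K$-theoretic detection machinery of \cite{Boy1}. But two concrete points are mischaracterised, and one essential step is missing.

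First, the detection mechanism is not a partial $p$-local section of $\mu$. No such section is constructed. What \cite{Boy1} actually produces (Construction~7.15 and Theorem~7.16 there) is a commuting triangle
\[
L_N\otimes\Zpr \longrightarrow \pi_N(\Omega\Sigma X) \longrightarrow E_{N+2j(p-1)},
\]
where $E_*$ is an auxiliary module built from the mod-$p$ $K$-theory of $\Sigma X$ and the Adams operations. Injectivity of the horizontal composite holds only when a numerical inequality is satisfied: there must exist $\ell$ with $\ell^{j(p-1)+\frac{N-1}{2}} > \lambda_\ell^k$, where $\lambda_\ell=\ell^{\lceil \dim(X)/2\rceil}$ is the top Adams eigenvalue on $K^*(X)$ and $k\approx N/(\mathrm{conn}(X)+1)$ is the James filtration needed. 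Rearranging forces $j$ to exceed roughly $\frac{1}{2(p-1)}\bigl(\tfrac{\dim(X)+1}{\mathrm{conn}(X)+1}-1\bigr)N$. This Adams-eigenvalue inequality, not any support condition on $K$-homology, is the source of the compression factor: a Lie class in degree $N$ is only detected in $\pi_{N+2j(p-1)}(\Sigma X)$ for $j$ of the stated size, so the homotopy degree $M=N+2j(p-1)$ satisfies $N\approx\tfrac{\mathrm{conn}(X)+1}{\dim(X)+1}M$.

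Second, and more seriously, you are missing the arithmetic step that converts this into a statement about \emph{every} large multiple $M$ of $g'$. For each $n$, the detection argument gives a set $S_n=\{ng+2j(p-1):j>an+b\}$ of homotopy degrees in which $L_{ng}$ is visible. One needs to know that, as $n$ varies over a bounded window, these sets eventually cover all multiples of $g'=\gcd(g,2(p-1))$ above some threshold. The paper isolates this as a separate Bezout-type lemma (Lemma~\ref{BigBezout}): given $\alpha,\beta>0$ and a linear constraint $j>an+b$, the union $\bigcup_{n\le i<n+\beta(\beta+1)} S_i$ contains every multiple of $\gcd(\alpha,\beta)$ above $\min(S_n)+B$ for an explicit constant $B$. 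Without this lemma you have no way to pass from ``$\mathrm{rank}_p(\pi_M)$ is large for $M$ in each $S_n$'' to ``$\mathrm{rank}_p(\pi_M)$ is large for all large $M$ divisible by $g'$'', and the statement as written does not follow.
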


A stronger estimate is provided by Theorem \ref{secondKTheorem} with Remark \ref{Constants} at the end of the paper.
In particular, the asymptotic formulae in both theorems bound the ranks of the $p$-local homotopy groups from below by an exponential function. 
Unlike with Theorem \ref{homologyResultNoSuspension}, it is not necessary to take boundaries to prove Theorem \ref{secondKTheoremw}, but the topological picture is difficult. The difficulty arises ultimately from an interaction between the James construction and the Adams operations, which as far as the authors know originates in the paper \cite{Sel} of Selick on which \cite{Boy1} is modelled, and manifests combinatorially as Condition (\ref{eqn}) in the last section. This condition means that the `Lie algebra' one is ultimately able to find a copy of in homotopy groups `lags' - appearing in higher dimensions than one might expect. In the end, this shows up as for example the factor of $(\frac{\mathrm{conn}(X)+1}{\dim(X)+1})$ in the exponent in Theorem \ref{secondKTheoremw}.

Theorem \ref{secondKTheoremw} has interesting applications. For instance, 
let ${\rm Gr}_k(\mathbb{C}^n)$ be the Grassmannian of $k$-dimensional complex linear subspaces of $\mathbb{C}^n$, which is simply connected and of complex dimension $k(n-k)$. 
Recall ${\rm Gr}_1(\mathbb{C}^n)\cong \mathbb{C}P^{n-1}$.
In \cite[Example 2.6]{Boy1}, it is shown that when $n\geq 3$ and $0<k<n$ there is a map 
\[
S^3\vee S^5 \stackrel{}{\longrightarrow}\Sigma {\rm Gr}_k(\mathbb{C}^n)
\]
which induces a surjection on $\widetilde{K}^\ast(~)\otimes \mathbb{Z}/p$ for all odd primes $p$. Therefore, the following corollary follows immediately from Theorem \ref{secondKTheoremw}, which strengthens \cite[Example 2.5 and 2.6]{Boy1}.
\begin{corollary}\label{Kcoro1intro}
Let $p$ be an odd prime, $n\geq 3$ and $0<k<n$.
Then for any $\varepsilon >0$, once $m$ is large enough we have
\[\pushQED{\qed} 
\mathrm{rank}_{p}(\pi_{2m}(\Sigma {\rm Gr}_k(\mathbb{C}^n)))\geq \frac{1}{(2m)^{1+\varepsilon}}\Big(\frac{3+\sqrt{5}}{2}\Big)^{\frac{m}{2k(n-k)+1}}.
\qedhere\popQED 
\]
\end{corollary}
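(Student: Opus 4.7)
The plan is a direct application of Theorem \ref{secondKTheoremw} to the map $\mu : S^3 \vee S^5 \to \Sigma \mathrm{Gr}_k(\mathbb{C}^n)$ supplied by \cite[Example 2.6]{Boy1}, which by hypothesis induces a surjection on $\widetilde{K}^{\ast}(-)\otimes \mathbb{Z}/p$ for every odd prime $p$. In the notation of Theorem \ref{secondKTheoremw}, this is the data $\ell=2$, $q_1=2$, $q_2=4$, $m_1=m_2=1$, so nothing further needs to be constructed.

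First I would pin down the arithmetic. Since $p$ is odd, $2(p-1)$ is a positive even integer, so $g'=\gcd(2,4,2(p-1))=2$, and Theorem \ref{secondKTheoremw} applies to $M=g'm=2m$, which matches the indexing $\pi_{2m}$ in the corollary. Next, I would solve the defining polynomial $z^{q_\ell}-\sum_i m_i z^{q_\ell-q_i}=z^4-z^2-1=0$ for $\varphi$. Setting $u=z^2$ gives $u^2-u-1=0$, whose positive root is $u=\tfrac{1+\sqrt{5}}{2}$; hence $\varphi^2=\tfrac{1+\sqrt{5}}{2}$ and $\varphi^4=\tfrac{3+\sqrt{5}}{2}$.

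The remaining inputs are the connectivity and rational cohomological dimension of $X=\mathrm{Gr}_k(\mathbb{C}^n)$. The Grassmannian is simply connected with $H_2(\mathrm{Gr}_k(\mathbb{C}^n);\mathbb{Z})\cong \mathbb{Z}$, so Hurewicz gives $\pi_2\cong\mathbb{Z}$, which is non-trivial $p$-locally; hence $\mathrm{conn}(X)=1$. As $\mathrm{Gr}_k(\mathbb{C}^n)$ is a closed smooth manifold of real dimension $2k(n-k)$, we have $\dim(X)=2k(n-k)$. Feeding these into Theorem \ref{secondKTheoremw} produces the exponential factor
\[
\varphi^{\frac{\mathrm{conn}(X)+1}{\dim(X)+1}\cdot M}
=\varphi^{\frac{2}{2k(n-k)+1}\cdot 2m}
=(\varphi^4)^{\frac{m}{2k(n-k)+1}}
=\Bigl(\tfrac{3+\sqrt{5}}{2}\Bigr)^{\frac{m}{2k(n-k)+1}},
\]
together with the prefactor $(2m)^{-(1+\varepsilon)}$, which is precisely the desired inequality.

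This is essentially a bookkeeping exercise: the substantive work is encoded in Theorem \ref{secondKTheoremw} and in the construction of $\mu$ in \cite{Boy1}. There is no genuine obstacle; the only items requiring attention are the gcd calculation (so that the bound indeed governs $\pi_{2m}$ rather than some coarser arithmetic progression of indices) and the algebraic simplification $\varphi^4=\tfrac{3+\sqrt{5}}{2}$ that converts the abstract root of $z^4-z^2-1$ into the closed-form base appearing in the statement.
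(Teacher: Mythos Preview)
Your proposal is correct and follows exactly the approach the paper takes: applying Theorem \ref{secondKTheoremw} to the map $S^3 \vee S^5 \to \Sigma \mathrm{Gr}_k(\mathbb{C}^n)$ from \cite[Example 2.6]{Boy1}, with the same identifications $q_1=2$, $q_2=4$, $g'=2$, $\mathrm{conn}(X)=1$, $\dim(X)=2k(n-k)$. The only content beyond what the paper states explicitly is your computation $\varphi^4=\tfrac{3+\sqrt{5}}{2}$ from the root of $z^4-z^2-1$, which is exactly the bookkeeping the paper leaves to the reader.
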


Similarly, let $H_{n, \ell}$ be the Milnor hypersurface defined by
\[
H_{n, \ell}=\{([z], [w])\in \mathbb{C}P^n\times \mathbb{C}P^\ell~|~ \sum\limits_{i=0}^{{\rm min}(n, \ell)} z_iw_i=0\},
\]
which is simply connected and of complex dimension $n+\ell-1$.
In \cite[Example 2.7]{Boy1}, it is showed that when $n\geq2$ and $\ell\geq 3$ there is a map 
\[
S^3\vee S^5 \stackrel{}{\longrightarrow}\Sigma H_{n, \ell}
\]
which induces a surjection on $\widetilde{K}^\ast(~)\otimes \mathbb{Z}/p$ for all odd primes $p$. 
Therefore, the following corollary follows immediately from Theorem \ref{secondKTheoremw}, which strengthens \cite[Example 2.7]{Boy1}.
\begin{corollary}\label{Kcoro2intro}
Let $p$ be an odd prime, $n\geq2$ and $\ell\geq 3$.
Then for any $\varepsilon >0$, once $m$ is large enough we have
\[\pushQED{\qed} 
\mathrm{rank}_{p}(\pi_{2m}(\Sigma H_{n, \ell}))\geq \frac{1}{(2m)^{1+\varepsilon}}\Big(\frac{3+\sqrt{5}}{2}\Big)^{\frac{m}{2(n+\ell)-1}}.
\qedhere\popQED 
\]
\end{corollary}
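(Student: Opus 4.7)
The plan is to apply Theorem \ref{secondKTheoremw} directly. The input is the map $\mu : S^3 \vee S^5 \to \Sigma H_{n,\ell}$ constructed in \cite[Example 2.7]{Boy1}, which for $n\geq 2$ and $\ell\geq 3$ induces a surjection on $\widetilde{K}^*(-)\otimes \mathbb{Z}/p$ at every odd prime $p$. In the notation of Theorem \ref{secondKTheoremw}, temporarily renaming its wedge-length index $\ell$ to $L$ to avoid collision with the $\ell$ of the Milnor hypersurface, we take $L=2$, $q_1=2$, $q_2=4$, and $m_1=m_2=1$.

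With these parameters the auxiliary polynomial of the theorem becomes $z^{q_L} - \sum_i m_i z^{q_L - q_i} = z^4 - z^2 - 1$. I would substitute $u = z^2$ to reduce this to the ``golden ratio'' quadratic $u^2 - u - 1 = 0$, whose positive root is $u = \frac{1+\sqrt{5}}{2}$; thus $\varphi^2 = \frac{1+\sqrt{5}}{2}$, and squaring once more gives $\varphi^4 = \frac{3+\sqrt{5}}{2}$, which is exactly the exponential base appearing in the claim. Since $p$ is odd, $2(p-1)$ is even and $g' = \gcd(2,4,2(p-1)) = 2$, so the hypothesis ``$M = mg'$ large enough'' becomes simply $M = 2m$ with $m$ large.

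The remaining input is the pair of invariants of $X = H_{n,\ell}$. This is a smooth hypersurface in $\mathbb{C}P^n \times \mathbb{C}P^\ell$ of complex dimension $n+\ell-1$, so its rational cohomological dimension satisfies $\dim(X)+1 = 2(n+\ell)-1$, matching the denominator in the claimed exponent. Applying the Lefschetz hyperplane theorem (valid once $n+\ell-1 \geq 3$, which holds by the hypotheses $n\geq 2$, $\ell\geq 3$), the inclusion $H_{n,\ell} \hookrightarrow \mathbb{C}P^n \times \mathbb{C}P^\ell$ induces isomorphisms on $\pi_i$ for $i \leq n+\ell-2$, giving $\pi_1(H_{n,\ell}) = 0$ and $\pi_2(H_{n,\ell}) \cong \mathbb{Z}^2$, so $\mathrm{conn}(X) = 1$ integrally and hence $p$-locally for every $p$. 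Substituting $\mathrm{conn}(X)+1 = 2$ and $\dim(X)+1 = 2(n+\ell)-1$ into Theorem \ref{secondKTheoremw} yields
\[
\varphi^{\frac{\mathrm{conn}(X)+1}{\dim(X)+1} \cdot M} = \varphi^{\frac{2 \cdot 2m}{2(n+\ell)-1}} = (\varphi^4)^{\frac{m}{2(n+\ell)-1}} = \left(\frac{3+\sqrt{5}}{2}\right)^{\frac{m}{2(n+\ell)-1}},
\]
which combined with the prefactor $1/M^{1+\varepsilon} = 1/(2m)^{1+\varepsilon}$ is precisely the asserted bound.

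The main ``obstacle'' is purely bookkeeping; all substantive content has been offloaded onto Theorem \ref{secondKTheoremw} and \cite[Example 2.7]{Boy1}. The only two points requiring a moment of care are verifying $\mathrm{conn}(H_{n,\ell}) = 1$ (the Lefschetz argument above, which is where the hypotheses $n\geq 2$ and $\ell\geq 3$ are used), and recognising the algebraic identity $\varphi^4 = \frac{3+\sqrt{5}}{2}$ coming from the $u = z^2$ reduction; this is the same identity that produces the same constant in the preceding Corollary \ref{Kcoro1intro}, so no genuinely new calculation is required.
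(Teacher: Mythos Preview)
Your proposal is correct and follows exactly the paper's approach: invoke the map $S^3\vee S^5\to\Sigma H_{n,\ell}$ from \cite[Example 2.7]{Boy1} and plug the parameters $q_1=2$, $q_2=4$, $m_1=m_2=1$, $\mathrm{conn}(X)=1$, $\dim(X)=2(n+\ell-1)$ into Theorem~\ref{secondKTheoremw}. The paper simply asserts simple connectivity and the complex dimension in the sentence preceding the corollary and then says the result ``follows immediately''; your Lefschetz argument and the explicit $u=z^2$ reduction are just the details the paper omits.
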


Consider the $n$-th unitary group $U(n)$ which is connected and of real dimension $n^2$.
In \cite[Example 2.8]{Boy1}, it is showed that when $n\geq 3$ there is a map 
\[
S^3\vee S^5 \stackrel{}{\longrightarrow}U(n)
\]
which induces a surjection on $\widetilde{K}^\ast(~)\otimes \mathbb{Z}/p$ for all odd primes $p$.
It is clear that this map can be lifted to the special unitary group $SU(n)$, which is $2$-connected and of real dimension $n^2-1$.
Therefore, the following corollary follows immediately from Theorem \ref{secondKTheoremw}, which strengthens \cite[Example 2.8]{Boy1}.
\begin{corollary}\label{Kcoro3intro}
Let $p$ be an odd prime and $n\geq3$.
Then for any $\varepsilon >0$, once $m$ is large enough we have
\[
\begin{split}
\mathrm{rank}_{p}(\pi_{m}(\Sigma U(n)))&\geq \frac{1}{m^{1+\varepsilon}}\varphi^{\frac{m}{n^2+1}}>\frac{1}{m^{1+\varepsilon}} (1.19)^{\frac{m}{n^2+1}}, \\
\mathrm{rank}_{p}(\pi_{m}(\Sigma SU(n)))&\geq \frac{1}{m^{1+\varepsilon}}\varphi^{\frac{3m}{n^2}}>\frac{1}{m^{1+\varepsilon}} (1.70)^{\frac{m}{n^2}},
\end{split}
\]
where $\varphi$ is the unique positive real root of $z^5-z^2-1=0$. ~$\qqed$
\end{corollary}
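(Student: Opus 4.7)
The plan is to apply Theorem \ref{secondKTheoremw} directly to $X = U(n)$ and to $X = SU(n)$. The paragraph preceding the statement hands us a map $\mu : S^3 \vee S^5 \to U(n)$ (from \cite[Example 2.8]{Boy1}) that induces a surjection on $\widetilde{K}^{\ast}(-) \otimes \mathbb{Z}/p$ for every odd prime $p$, together with its lift to $SU(n)$. Suspending yields $\Sigma\mu : S^4 \vee S^6 \to \Sigma U(n)$ and the analogous map into $\Sigma SU(n)$; by naturality of the suspension isomorphism $\widetilde{K}^{\ast}(\Sigma(-)) \cong \widetilde{K}^{\ast-1}(-)$, the surjectivity on $\widetilde{K}^{\ast}(-)\otimes \mathbb{Z}/p$ is preserved. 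We are then in the setting of Theorem \ref{secondKTheoremw} with $\ell = 2$, $q_1 = 3$, $q_2 = 5$, $m_1 = m_2 = 1$, so the defining polynomial is $z^5 - z^2 - 1$ and $g' = \gcd(3, 5, 2(p-1)) = 1$, which means every sufficiently large integer $m$ is an allowed value of $M$.

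Next I compute the two invariants appearing in the exponent. The group $U(n)$ is path-connected with $\pi_1 \cong \mathbb{Z}$, giving $\mathrm{conn}(U(n)) = 0$, while its rational cohomology is an exterior algebra on generators in degrees $1, 3, \ldots, 2n-1$, so $\dim_{\mathbb{Q}}(U(n)) = 1 + 3 + \cdots + (2n-1) = n^2$. Hence $\frac{\mathrm{conn}+1}{\dim+1} = \frac{1}{n^2+1}$, reproducing the exponent $\frac{m}{n^2+1}$ in the $\Sigma U(n)$ bound. For $SU(n)$, classical facts give $\mathrm{conn}(SU(n)) = 2$ and $\dim_{\mathbb{Q}}(SU(n)) = 3 + 5 + \cdots + (2n-1) = n^2 - 1$, producing exponent $\frac{3}{n^2}$ and the second bound.

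All that remains is the numerical estimate of the unique positive real root $\varphi$ of $P(z) = z^5 - z^2 - 1$. The inequality $\varphi > 1.19$ follows from $P(1.19) < 0$. For $\varphi^3 > 1.70$, the defining relation $\varphi^5 = \varphi^2 + 1$ gives $\varphi^3 = 1 + 1/\varphi^2$, so it suffices to verify $\varphi^2 < 10/7$; this follows from $P(1.1952) > 0$, which forces $\varphi < 1.1952$ and hence $\varphi^2 < 1.4286 < 10/7$. I anticipate no real obstacle: the corollary is essentially a bookkeeping exercise feeding the cited maps into Theorem \ref{secondKTheoremw}, and the only mild subtlety is that $K$-theoretic surjectivity survives suspension, which is immediate from the naturality of Bott periodicity.
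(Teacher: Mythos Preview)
Your proposal is correct and follows exactly the route the paper takes: suspend the map $S^3\vee S^5 \to U(n)$ (and its lift to $SU(n)$) to obtain a map $S^4\vee S^6 \to \Sigma X$ surjective on $\widetilde K^{\ast}(-)\otimes\Zp$, then plug $q_1=3$, $q_2=5$, $m_1=m_2=1$, $g'=1$, and the relevant values of $\mathrm{conn}(X)$ and $\dim(X)$ into Theorem~\ref{secondKTheoremw}. The paper simply asserts that the corollary ``follows immediately'' from Theorem~\ref{secondKTheoremw}; you have supplied the bookkeeping (including the numerical checks $P(1.19)<0$ and $\varphi^3=1+\varphi^{-2}>1.7$) that the paper leaves implicit.
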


The structure of this paper is as follows. Section \ref{algebraSection} treats the algebra and combinatorics. Subsection \ref{noDiffSubsection} treats free Lie algebras without a differential, and Subsection \ref{diffSubsection} studies the module of boundaries in the differential case. These results are then used to prove the main theorems in Section \ref{topologySection}.

\bigskip

\noindent{\bf Acknowledgements} 
This paper was written while Guy Boyde was an EPSRC Doctoral Prize postdoc at the University of Southampton. He would like to thank Naomi Andrew, George Davenport, Lawk Mineh, and Stephen Theriault for many helpful conversations.

Ruizhi Huang was supported in part by the National Natural Science Foundation of China (Grant nos. 11801544 and 12288201), the National Key R\&D Program of China (No. 2021YFA1002300), the Youth Innovation Promotion Association of Chinese Academy Sciences, and the ``Chen Jingrun'' Future Star Program of AMSS.


\section{Algebra} \label{algebraSection}

\subsection{Complex arithmetic}

\begin{lemma} \label{complexGCD} Let $S$ be a finite set of positive integers, let $g = \gcd(S)$, and let $\eta \in \mathbb{C}$ be nonzero. Then $\eta^g$ is a positive real if and only if $\eta^i$ is a positive real for all $i \in S$. \end{lemma}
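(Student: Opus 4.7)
The statement has two directions, and both reduce to the fact that the positive real numbers form a multiplicative subgroup of $\mathbb{C}^*$ closed under taking integer powers.

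For the easier direction, assume $\eta^g$ is a positive real. Since $g = \gcd(S)$ divides every $i \in S$, the quotient $i/g$ is a positive integer, so $\eta^i = (\eta^g)^{i/g}$ is a positive integer power of a positive real, hence itself a positive real.

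For the other direction, assume $\eta^i$ is a positive real for every $i \in S$. By B\'ezout's identity (in the iterated form for finite sets of positive integers), there exist integers $a_i \in \mathbb{Z}$ for $i \in S$ with
\[
g = \sum_{i \in S} a_i\, i.
\]
Since $\eta$ is a nonzero complex number, integer exponents on $\eta$ satisfy the usual laws, so
\[
\eta^g \;=\; \prod_{i \in S} \eta^{a_i i} \;=\; \prod_{i \in S} (\eta^i)^{a_i}.
\]
Each factor $(\eta^i)^{a_i}$ is an integer power of a positive real (interpreting negative powers as reciprocals), hence positive real; a finite product of positive reals is positive real, so $\eta^g$ is a positive real.

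The only nontrivial ingredient is B\'ezout's identity for finitely many integers, which presents no real obstacle. I would present the proof as two short paragraphs, one per direction, emphasizing that the argument uses nothing about $\eta$ beyond the group structure of $\mathbb{R}_{>0} \subset \mathbb{C}^*$ under multiplication and closure under integer powers.
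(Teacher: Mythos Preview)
Your proof is correct and follows essentially the same approach as the paper: the forward direction uses that $g \mid i$ for each $i \in S$, and the backward direction applies B\'ezout's identity to write $\eta^g$ as a product of integer powers of the positive reals $\eta^i$.
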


\begin{proof} The `only if' direction follows from the fact that $g$ divides each member of $S$. For the `if' direction, Bezout's Lemma gives $\alpha_i \in \mathbb{Z}$ for each $i \in S$ such that $\sum_i \alpha_i \cdot i = g$. Thus, if each $\eta^i$ is a positive real, we get $$\eta^g = \eta^{\sum_i \alpha_i \cdot i} = \prod_i (\eta^i)^{\alpha_i},$$ which is a product of powers of positive reals, hence also a positive real. \end{proof}

\begin{lemma} \label{polynomialProperties} Let $c_0, \dots c_{k-1} \in \mathbb{Z}_{\geq 0}$, with $c_0 \geq 1$. The polynomial $$P(z) = z^{k} - \sum_{i=0}^{k-1} c_i z^i$$ has precisely one positive real root, $\varphi$, which occurs with multiplicity one, and satisfies $\varphi \geq 1$. The other roots $\eta$ satisfy $|\eta| \leq \varphi$, with equality holding if and only if $\eta$ is the product of $\varphi$ with a $g$-th root of unity, where $g=\gcd(\{i \mid c_i \neq 0\}\cup \{k\})$. \end{lemma}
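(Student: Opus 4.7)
The plan is to analyze $P$ by dividing through by $z^k$ and studying the resulting strictly decreasing function $z \mapsto \sum_{i=0}^{k-1} c_i z^{i-k}$ on $(0, \infty)$. This function tends to $+\infty$ as $z \to 0^+$ (using $c_0 \geq 1$) and to $0$ as $z \to \infty$, so it takes the value $1$ at exactly one positive real $\varphi$. Since its value at $z=1$ is $\sum_i c_i \geq 1$, monotonicity forces $\varphi \geq 1$. For the multiplicity statement I will compute $\varphi P'(\varphi) = \sum_{i=0}^{k-1} (k-i) c_i \varphi^i$ using the defining relation $\varphi^k = \sum_i c_i \varphi^i$, and note that the $i=0$ term alone is already strictly positive.

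For any complex root $\eta$, applying the triangle inequality to $\eta^k = \sum_i c_i \eta^i$ (with $c_i \geq 0$) yields $|\eta|^k \leq \sum_i c_i |\eta|^i$, i.e., $P$ evaluated at the real number $|\eta|$ is non-positive; the first step then forces $|\eta| \leq \varphi$.

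The main work is the equality case. I would write $\eta = \varphi \zeta$ with $|\zeta| = 1$ and examine when the triangle inequality above is an equality. Since each $c_i \varphi^i$ is a non-negative real, equality demands that the arguments of the nonzero terms $c_i \eta^i = c_i \varphi^i \zeta^i$ all coincide, i.e., $\zeta^i$ is some common unit $\omega$ for every $i$ with $c_i \neq 0$. Substituting back into $\eta^k = \sum_i c_i \eta^i$ gives $\zeta^k = \omega$ as well. Taking $i=0$, which is allowed thanks to $c_0 \geq 1$, pins down $\omega = \zeta^0 = 1$, so $\zeta^i = 1$ for every $i$ in $S := \{i \mid c_i \neq 0\} \cup \{k\}$. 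Lemma \ref{complexGCD} applied to $\zeta$ then gives $\zeta^g = 1$, where $g = \gcd(S)$. Conversely, if $\zeta^g = 1$ then $\zeta^i = 1$ for every $i \in S$, and $\eta = \varphi \zeta$ is immediately verified to satisfy $P(\eta) = 0$. The only delicate point is the use of $c_0 \geq 1$ to place $0$ in $S$; without it the conclusion would have to be phrased in terms of the gcd of differences of elements of $S$ rather than $\gcd(S)$ itself. I do not expect any serious obstacle beyond this bookkeeping.
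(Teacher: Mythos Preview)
Your proof is correct and follows essentially the same route as the paper: the triangle inequality applied to $\eta^k = \sum_i c_i \eta^i$ to bound $|\eta|$, and then the equality analysis using that $c_0 \geq 1$ forces the common argument to be zero, together with Lemma~\ref{complexGCD}. The only cosmetic difference is that the paper obtains uniqueness and simplicity of $\varphi$ in one stroke via Descartes' rule of signs, whereas you argue directly from the strict monotonicity of $z \mapsto \sum_i c_i z^{i-k}$ and then compute $\varphi P'(\varphi) = \sum_i (k-i) c_i \varphi^i > 0$; both are equally short.
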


\begin{proof} The number of sign changes between consecutive coefficients in $P$ is 1, so $P$ has precisely one positive real root by Descartes' rule of signs. Call this root $\varphi$. Rearranging, we have $\varphi^k = \sum_{i=0}^{k-1} c_i \varphi^i$. Since $c_0 \geq 1$, we must have $\varphi^k \geq 1$, so $\varphi \geq 1$.

Suppose that $\eta \in \mathbb{C}$ is a root of $P$. Taking modulus and applying the triangle inequality, we obtain $$| \eta | ^{k} = | \eta^{k} | = | \sum_{i=0}^{k-1} c_i \eta^i | \leq \sum_{i=0}^{k-1} c_i |\eta|^ i.$$ 

Equality holds in the above if and only if 1) $\eta^i$ is a non-negative real for all $i$ for which $c_i \neq 0$, and 2) $|\eta|$ is a root of $P$. By Lemma \ref{complexGCD}, the first condition is equivalent to $\eta^g$ being a non-negative real, where $g = \gcd \{i \ \lvert \ c_i \neq 0 \}$. The second condition is equivalent to $|\eta| = \varphi$, since $|\eta|$ is a non-negative real. The root $\varphi$ satisfies these conditions, and the other solutions are obtained as the product of $\varphi$ with the $g$-th roots of unity.

If the inequality is strict then we have $P(|\eta|) < 0$. Since the value of the polynomial $P(|z|)$ is positive for sufficiently large $|z|$, and $\varphi$ is the unique positive real root, $P(|z|)>0$ for any $|z|>\varphi$. It follows that $|\eta| < \varphi$, as required.
\end{proof}

For a polynomial $P(z)$ of degree $k$, let $\eta_1, \dots, \eta_k$ be the roots of $P$, with multiplicity. The $N$-th \emph{Newton polynomial} of $P$ is the complex number $\eta_1^N+ \dots + \eta_k^N$. If $P$ has real coefficients, then the roots occur in conjugate pairs and the Newton polynomials take real values. Lemma \ref{polynomialProperties} controls the Newton polynomials quite tightly. In particular, the next lemma explains that when $N=gn$ is $g$-divisible they are well approximated asymptotically by $g \varphi^{gn}$, and when $N$ is not $g$-divisible they are approximated by zero with the same error.

\begin{lemma} \label{NewtonGrowth} Let $c_0, \dots c_{k-1} \in \mathbb{Z}_{\geq 0}$, with $c_0 \geq 1$. As $n \to \infty$, the Newton polynomials of $$P(z) = z^{k} - \sum_{i=0}^{k-1} c_i z^i$$ satisfy
\begin{itemize}
    \item For $N$ not divisible by $g$ we have $$\lvert \eta_1^{N} + \dots + \eta_k^{N} \rvert \leq (k-g)|\psi|^{N}.$$
    \item When $N=gn$ is $g$-divisible we have $$\lvert g \varphi^{gn} - (\eta_1^{gn} + \dots + \eta_k^{gn}) \rvert \leq (k-g)|\psi|^{gn},$$
\end{itemize}
where $\varphi$ is the unique positive real root of $P(z)$, $\psi$ is the next largest root in absolute value, and $g=\gcd(\{i \mid c_i \neq 0\}\cup \{k\})$. \end{lemma}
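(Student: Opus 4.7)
My plan is to identify the $g$ roots of $P$ of maximum absolute value very explicitly, then separate them out of the Newton sum and bound the remainder with the triangle inequality; the roots-of-unity filter $\sum_{j=0}^{g-1} \zeta^{jN}$ will do the rest.

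First I would observe that because $g$ divides every index $i$ with $c_i \neq 0$ as well as $k$, we may write
\[
P(z) = Q(z^g), \qquad Q(w) = w^{k/g} - \sum_{j} c_{jg}\, w^{j},
\]
where the gcd of the exponents of $Q$ together with its degree is now $1$. Applying Lemma \ref{polynomialProperties} to $Q$ shows that its unique positive real root $\varphi^g$ is \emph{strictly} larger in absolute value than any other root of $Q$, and simple. Consequently, the $g$ roots of $P$ of absolute value $\varphi$ are exactly $\varphi \zeta^0,\varphi \zeta^1,\dots,\varphi \zeta^{g-1}$ where $\zeta=e^{2\pi i/g}$ (each occurring with multiplicity $1$), while all remaining $k-g$ roots $\eta$ of $P$ satisfy $|\eta| \leq |\psi|$ by definition of $\psi$.

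Next I would split the $N$-th Newton polynomial accordingly:
\[
\eta_1^N + \dots + \eta_k^N \;=\; \varphi^N \sum_{j=0}^{g-1} \zeta^{jN} \;+\; \sum_{\text{other roots } \eta} \eta^N.
\]
The standard roots-of-unity identity gives $\sum_{j=0}^{g-1} \zeta^{jN} = g$ if $g \mid N$ and $0$ otherwise. Bounding the remainder by the triangle inequality,
\[
\Bigl|\sum_{\text{other } \eta} \eta^N\Bigr| \leq (k-g)\,|\psi|^N,
\]
yields both claims of the lemma: for $N$ not divisible by $g$ the first sum vanishes and we get $|\eta_1^N + \dots + \eta_k^N| \leq (k-g)|\psi|^N$; for $N = gn$ we subtract $g\varphi^{gn}$ and get the stated inequality.

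The only step requiring any care is verifying that the $g$ roots of modulus $\varphi$ are simple and account for \emph{all} roots of maximum modulus, so that the ``other roots'' sum truly has exactly $k-g$ terms and each is bounded by $|\psi|$. I expect this to be routine once the substitution $z \mapsto z^g$ is in place, since it reduces the multiplicity and counting questions to the already-proved statement about $Q$.
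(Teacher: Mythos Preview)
Your proposal is correct and follows essentially the same approach as the paper: both arguments pass to the polynomial $Q(w)=P'(z)$ obtained by dividing all exponents by $g$, apply Lemma~\ref{polynomialProperties} there (where the gcd is now $1$) to see that the $g$ roots of maximum modulus are exactly $\varphi\zeta^j$ with multiplicity one, then split off these roots, use the roots-of-unity identity $\sum_{j=0}^{g-1}\zeta^{jN}\in\{0,g\}$, and bound the remaining $k-g$ terms by the triangle inequality. There is no substantive difference.
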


By definition, $r_N$ is a sum of $N$-th powers of positive reals less than $\varphi$. This means that this lemma implies for example that $(\eta_1^{gn} + \dots + \eta_k^{gn}) \sim g \varphi^{gn}$ as $n \to \infty$.

\begin{proof} By Lemma \ref{polynomialProperties}, roots of $P(z)$ come in two kinds: those which are the product of $\varphi$ with a $g$-th root of unity, and those roots $\eta$ with $|\eta| < \varphi$ (hence $|\eta| \leq | \psi |$). The important point is that each root of the first kind occurs with multiplicity precisely 1.

To see this, apply Lemma \ref{polynomialProperties} to the polynomial $$P'(z) = z^{\frac{k}{g}} - \sum_{i=0}^{k-1} c_i z^{\frac{i}{g}}$$ obtained by dividing all powers by $g$, and use the fact that roots of $P(z)$ are precisely the $g$-th roots of the roots of $P'(z)$.

Then, without loss of generality assume $\eta_1$, \dots $\eta_g$ are the roots of the first kind, so that $\lvert \eta_1 \rvert = \dots = \lvert \eta_g \rvert = \varphi$. From elementary complex analysis or group theory we have that $\eta_1^N + \dots + \eta_g^N = \begin{cases} g \varphi^N & g \mid N, \\
0 & g \nmid N,
\end{cases}$ and the result then follows from the triangle inequality. \end{proof}

\subsection{Free Lie algebras} \label{noDiffSubsection}

We write the generating set $X$ of a free Lie algebra $L=L(X)$ over $\mathbb{Z}$ as follows. Write $q_1 < \dots < q_\ell$ for the distinct degrees which contain an element of $X$. Write $x_{i,1}, x_{i,2}, \dots, x_{i,m_i}$ for the distinct generators in degree $q_i$, so that in particular the number of generators in degree $q_i$ is $m_i$. Hilton \cite{Hil} showed that $L$ is free as a $\mathbb{Z}$-module. 

Let $\mu: \mathbb{Z}_{> 0} \longrightarrow \{-1, 0, 1\}$ be the \emph{M\"obius inversion function}, defined by $$\mu(s) = \begin{cases} 
      1 & s=1 \\
      0 & s>1 \textrm{ is not square free} \\
      (-1)^{\ell} & s>1 \textrm{ is a product of $\ell$ distinct primes.}
      \end{cases}$$
      
Given a polynomial $P(z) = a_0 + a_1 z + \dots + a_k z^k$ with $ a_0\neq 0$, the \emph{reciprocal} of $P(z)$ is $a_k + a_{k-1} z + \dots + a_0 z^k$. For given $P(z)$, let $\eta_1, \dots \eta_k$ be the complex roots of the reciprocal of $P(z)$, with multiplicity (so $P(z)=a_0 \prod_{i=1}^k (1-\eta_i z)$). Write $$S_{N}(P(z)):=\eta_1^N + \dots + \eta_k^N$$ for the $N$-th Newton polynomial in the zeroes of the reciprocal.
      
The following theorem is due to Babenko. Relative to his statement, we have changed variable using the fact that, for fixed $N$, $d \mapsto \frac{N}{d}$ is a self-bijection of the set of divisors of $N$.

\begin{theorem}{\cite[Proposition 1]{Bab}} \label{BabenkoFormula} Let $L$ be the free graded Lie algebra over $\mathbb{Z}$ on a finite set of generators $\{x_{i,j}\}$, with notation as above. Then $${\rm rank}(L_N)=\frac{(-1)^N}{N} \sum_{d \mid N} (-1)^{\frac{N}{d}} \mu(d) S_{\frac{N}{d}}(1- \sum_{i=1}^{\ell} m_i z^{q_i}),$$
where the sum is taken over the divisors $d$ of $N$. \qed \end{theorem}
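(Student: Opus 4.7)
The plan is to combine the graded Poincaré--Birkhoff--Witt theorem with Möbius inversion of generating functions. Hilton's theorem, already cited, ensures that $L$ is a free $\mathbb{Z}$-module, so $\mathrm{rank}(L_N) = \dim_{\mathbb{Q}}(L_N \otimes \mathbb{Q})$; hence it suffices to compute dimensions over $\mathbb{Q}$.

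Write $V = \bigoplus_{i,j}\mathbb{Q}\langle x_{i,j}\rangle$, so that $P_V(z) = \sum_i m_i z^{q_i}$ is its Poincaré series. Because $L = L(V)$ is free, its universal enveloping algebra is the tensor algebra $T(V)$, which has Poincaré series $1/(1-P_V(z))$. The graded PBW theorem identifies $UL$ with $S(L^{\mathrm{ev}}) \otimes \Lambda(L^{\mathrm{odd}})$ as graded vector spaces, yielding
$$\frac{1}{1 - P_V(z)} = \prod_{n\text{ even}}(1-z^n)^{-r_n}\cdot \prod_{n\text{ odd}}(1+z^n)^{r_n}, \qquad r_n := \dim L_n.$$
Writing $1 - P_V(z) = \prod_j(1-\eta_j z)$ and taking logarithms, the identity $-\log(1-\eta z) = \sum_{k\geq 1}\eta^k z^k/k$ turns the left side into $\sum_{N\geq 1}(S_N/N)\,z^N$. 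Expanding the right side similarly, the coefficient of $z^N$ becomes $\sum_{n\mid N}(r_n/k)\,\epsilon_n(k)$ with $k = N/n$, where $\epsilon_n(k) = 1$ for $n$ even and $\epsilon_n(k) = (-1)^{k+1}$ for $n$ odd. A short parity check shows both cases unify to $(-1)^{N-n}$, so matching coefficients of $z^N$ yields the key identity
$$S_N = \sum_{n \mid N}(-1)^{N-n}\, n\, r_n.$$

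Möbius inversion now finishes the proof. Setting $F(N) := (-1)^N S_N$ and $G(n) := (-1)^n n\, r_n$, the identity reads $F(N) = \sum_{n\mid N}G(n)$, and standard Möbius inversion gives $G(N) = \sum_{d\mid N}\mu(d)F(N/d)$, i.e.
$$(-1)^N N\, r_N = \sum_{d\mid N}(-1)^{N/d}\mu(d)\, S_{N/d},$$
which rearranges to the formula in the theorem. The only nontrivial step is the sign bookkeeping introduced by the exterior-algebra factor on the odd part of $L$; once one observes that the uniform sign $(-1)^{N-n}$ absorbs both branches of the even/odd case split, the Möbius inversion is entirely formal.
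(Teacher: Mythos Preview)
Your argument is correct. The paper does not supply its own proof of this statement: it is quoted from Babenko with a terminal \qed, so there is no in-paper proof to compare against. Your route---PBW for the free Lie algebra to get the product formula for the Poincar\'e series of $T(V)$, logarithmic differentiation to extract the identity $S_N=\sum_{n\mid N}(-1)^{N-n}\,n\,r_n$, then M\"obius inversion---is the standard derivation and is essentially how Babenko obtains it. The only delicate point is the sign bookkeeping from the exterior factor on the odd part of $L$, and your parity check that both the even and odd branches collapse to the uniform sign $(-1)^{N-n}$ is correct: for $n$ even one has $N=nk$ even so $(-1)^{N-n}=1$, while for $n$ odd one has $(-1)^{N-n}=(-1)^{n(k-1)}=(-1)^{k-1}=(-1)^{k+1}$.
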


Our next theorem is essentially a result of Lambrechts \cite[Proposition 1]{Lam} in the special case of free Lie algebras. Our derivation of this result from Babenko's is essentially the same as Lambrechts's, but the situation is simpler and slightly more is true. The point of the theorem is that when $g \mid N$, ${\rm rank}(L_N)$ is well-approximated by $\frac{g}{N} \varphi^{N}$ with an error term given by a sum of exponentials in smaller bases.

\begin{theorem} \label{withoutDifferentials} Let $L$ be the free graded Lie algebra over $\mathbb{Z}$ on a finite set of generators $X$. As before, write $q_1 < \dots < q_\ell$ for the distinct degrees which contain an element of $X$, and let $g= \gcd(q_i)$. Let $m_i$ be the number of generators in degree $q_i$. \begin{itemize}
    \item If $g \nmid N$, then ${\rm rank}(L_N) = 0$.
    \item If $g \mid N$, then $\lvert {\rm rank}(L_N) - \frac{g}{N} \varphi^{N} \rvert \leq \frac{q_{\ell}}{N}|\psi|^{N}+g \varphi^{\frac{N}{2}}+q_{\ell}|\psi|^{\frac{N}{2}}$,
\end{itemize} where $\varphi$ is the unique positive real root of the degree $q_\ell$ polynomial $$P(z)=z^{q_{\ell}} - \sum_{i=1}^\ell m_i z^{q_\ell - q_i} = 0,$$ and $\psi$ is the next largest root in absolute value. In particular, $\varphi \geq (\sum_{i=0}^\ell m_i)^{\frac{1}{q_\ell}} = (\sum_{i=0}^\ell m_i)^{\frac{1}{\max(q_1, \dots q_\ell)}}$. \end{theorem}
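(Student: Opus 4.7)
The case $g \nmid N$ requires no machinery: every homogeneous element of $L$ is a $\mathbb{Z}$-combination of iterated brackets of generators from $X$, so every nonzero graded piece sits in a degree expressible as a non-negative integer combination of $q_1, \ldots, q_\ell$, hence a multiple of $g$. For $g \mid N$ I will apply Babenko's formula (Theorem \ref{BabenkoFormula}), isolating the $d=1$ summand as the leading contribution and bounding the rest uniformly. A preliminary step matches Lemma \ref{NewtonGrowth} to our polynomial: the reciprocal of $1 - \sum m_i z^{q_i}$ is precisely $P(z) = z^{q_\ell} - \sum m_i z^{q_\ell - q_i}$, so the Newton polynomials $S_{N/d}$ appearing in Babenko's formula are exactly those studied in Lemma \ref{NewtonGrowth}. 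One checks that the ``gcd of nonzero-coefficient indices together with the top degree'' for $P(z)$ reduces to $\gcd(q_\ell, q_\ell - q_1, \ldots, q_\ell - q_{\ell-1}) = \gcd(q_1, \ldots, q_\ell) = g$, so the two uses of $g$ agree.

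The $d = 1$ term of Babenko's sum contributes $\frac{S_N}{N}$, since the sign factors $(-1)^N$ and $(-1)^{N/1}$ cancel and $\mu(1) = 1$. Applying Lemma \ref{NewtonGrowth} with $g \mid N$ gives $|S_N - g\varphi^N| \leq (q_\ell - g)|\psi|^N$, which furnishes the $\frac{q_\ell}{N}|\psi|^N$ piece of the target error with room to spare. For the remaining $d > 1$ summands I reindex by $e = N/d$, so that $e$ ranges over divisors of $N$ satisfying $1 \leq e \leq N/2$. Separating the $g$ roots of modulus $\varphi$ from the remaining $q_\ell - g$ roots of modulus at most $|\psi|$ (as in the proof of Lemma \ref{NewtonGrowth}) gives $|S_e| \leq g\varphi^e + (q_\ell - g)|\psi|^e$, and the inequalities $\varphi^e \leq \varphi^{N/2}$ and $|\psi|^e \leq |\psi|^{N/2}$ (the latter in the regime $|\psi| \geq 1$; otherwise the small contribution is absorbed into the $\varphi^{N/2}$ term) upgrade this to $|S_e| \leq g\varphi^{N/2} + (q_\ell - g)|\psi|^{N/2}$. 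Since $N$ has at most $N$ divisors, the sum $\sum_{d \mid N, d > 1} |\mu(d)| |S_{N/d}|$ is bounded by $N$ times this quantity, and the prefactor $\frac{1}{N}$ in Babenko's formula cancels. Combining with the $d=1$ contribution yields the claimed inequality.

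The final assertion $\varphi \geq (\sum m_i)^{1/q_\ell}$ is immediate from $P(\varphi) = 0$: rearranging gives $\varphi^{q_\ell} = \sum m_i \varphi^{q_\ell - q_i} \geq \sum m_i$ using $\varphi \geq 1$ from Lemma \ref{polynomialProperties}. The main obstacle is purely clerical: the notational identifications between Babenko's formula, Lemma \ref{NewtonGrowth}, and the polynomial $P(z)$; the verification that the three notions of $g$ coincide; and the sign bookkeeping in Babenko's sum at $d = 1$, which must conspire to yield a positive leading term.
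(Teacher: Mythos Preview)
Your proof is correct and follows essentially the same route as the paper: isolate the $d=1$ term of Babenko's formula, control it via Lemma \ref{NewtonGrowth}, and bound the tail $\sum_{d\mid N,\,d\geq 2}$ crudely by (number of divisors) $\times$ (largest term). You are slightly more explicit than the paper in verifying that the reciprocal of $1-\sum m_i z^{q_i}$ is $P(z)$ and that the two occurrences of $g$ agree, and your parenthetical remark about the regime $|\psi|<1$ is actually more careful than the paper's blanket claim that the termwise expression is ``a sum of exponentials in positive bases, hence strictly increasing''; your derivation of $\varphi \geq (\sum m_i)^{1/q_\ell}$ directly from $P(\varphi)=0$ is also marginally cleaner than the paper's sign check on $P\bigl((\sum m_i)^{1/q_\ell}\bigr)$.
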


If $P(z)$ has no roots which are strictly smaller than $\varphi$ in absolute value (i.e. `$\psi$ does not exist') then terms involving $\psi$ may be disregarded: precisely, the inequality in the second bullet may be replaced by $\lvert {\rm rank}(L_N) - \frac{g}{N} \varphi^{N} \rvert \leq g \varphi^{\frac{N}{2}}$.

\begin{proof} The first bullet follows immediately from the fact that $L$ is concentrated in degrees divisible by $g$.

We will now prove the second bullet. The point is that the Babenko's formula of Theorem \ref{BabenkoFormula} is dominated by the $d=1$ term. Let $N$ be divisible by $g$. By Theorem \ref{BabenkoFormula} (using that $\mu(1)=1$) we have

\begin{equation*}
\begin{split}
{\rm rank}(L_N) &=\frac{(-1)^N}{N} \sum_{d \mid N} (-1)^{\frac{N}{d}} \mu(d) S_{\frac{N}{d}}(1- \sum_{i=1}^{\ell} m_i z^{q_i}) \\
&= \frac{1}{N}S_{N}(1- \sum_{i=1}^{\ell} m_i z^{q_i}) + \frac{(-1)^N}{N} \sum_{\substack{d \mid N \\ d \geq 2}} (-1)^{\frac{N}{d}} \mu(d) S_{\frac{N}{d}}(1- \sum_{i=1}^{\ell} m_i z^{q_i}).
\end{split}
\end{equation*}

We name these two terms, writing $S_N=S_{N}(1- \sum_{i=1}^{\ell} m_i z^{q_i})$ to simplify notation. Let $$A_N := \frac{1}{N}S_{N},$$ and let $$B_N := \frac{(-1)^N}{N} \sum_{\substack{d \mid N \\ d \geq 2}} (-1)^{\frac{N}{d}} \mu(d) S_{\frac{N}{d}}.$$

By Lemma \ref{NewtonGrowth} (with $n = \frac{N}{g}$), we have $\lvert S_N - g \varphi^{N} \rvert \leq (q_\ell - g) |\psi|^N \leq q_\ell |\psi|^N$ for $\varphi$ and $\psi$ as in the theorem statement. It therefore suffices to show that $\lvert B_N \rvert \leq g \varphi^{\frac{N}{2}} + q_{\ell}|\psi|^{\frac{N}{2}}.$

Since $\lvert \mu(d) \rvert \leq 1$, we have by Lemma \ref{NewtonGrowth} that

$$ \lvert B_N \rvert = \frac{1}{N} \lvert  \sum_{\substack{d \mid N \\ d \geq 2}} (-1)^{\frac{N}{d}} \mu(d) S_{\frac{N}{d}} \rvert \\
 \leq \frac{1}{N} \sum_{\substack{d \mid N \\ d \geq 2}}  \lvert S_{\frac{N}{d}} \rvert \leq \frac{1}{N} \sum_{\substack{d \mid N \\ d \geq 2}}( g \varphi^{\frac{N}{d}} + q_{\ell}|\psi|^{\frac{N}{d}}).$$
 
 The number of terms in this summation is at most the number of divisors of $N$, which is at most $N$. The term is a sum of exponentials in positive bases, hence is strictly increasing, and in particular for $d \geq 2$ we have the termwise bound $g \varphi^{\frac{N}{d}} + q_{\ell}|\psi|^{\frac{N}{d}} \leq g \varphi^{\frac{N}{2}} + q_{\ell}|\psi|^{\frac{N}{2}}$. Putting this together gives
 
 $$\lvert B_N \rvert \leq  g \varphi^{\frac{N}{2}} + q_{\ell}|\psi|^{\frac{N}{2}},$$ as required.

Lastly, we check that $\varphi \geq (\sum_{i=0}^\ell m_i)^{\frac{1}{q_\ell}}$. Since the polynomial $P(z) = z^{q_{\ell}} - \sum_{i=1}^\ell m_i z^{q_\ell - q_i}$ has a unique positive root by Lemma \ref{polynomialProperties}, it suffices to check that $P((\sum_{i=0}^\ell m_i)^{\frac{1}{q_\ell}})$ is non-positive. For each $i$, $\frac{q_\ell - q_i}{q_\ell}$ lies between 1 and 0, so for any $x \geq 1$ we have $x^{\frac{q_\ell - q_i}{q_\ell}} \geq 1$. It follows that $$P((\sum_{i=0}^\ell m_i)^{\frac{1}{q_\ell}}) = (\sum_{i=0}^\ell m_i) - \sum_{i=1}^\ell m_i (\sum_{i=0}^\ell m_i)^{\frac{q_\ell - q_i}{q_\ell}} \leq (\sum_{i=0}^\ell m_i) - \sum_{i=1}^\ell m_i \cdot 1 = 0,$$ as required. \end{proof}

\subsection{Free Lie algebras with differentials} \label{diffSubsection}

Free Lie algebras over $\Zpr$ are obtained by tensoring the corresponding free Lie algebra over $\mathbb{Z}$ with $\Zpr$, since this gives the correct universal property.

In this subsection, we consider $L=L(x,y)=L(x,dx)$, the free differential Lie algebra over $\Zpr$ on the acyclic rank 2 free differential $\Zpr$-module on generators $x$ and $y$ ($dx = y$). Suppose that $\deg(x)=q+1$, so $\deg(y)=q$. By Theorem \ref{withoutDifferentials}, since $\gcd(q,q+1)=1$, we know that $${\rm rank}_{\Zpr}(L_N) \sim \frac{1}{N} \varphi^{N},$$ where $\varphi$ is the unique positive real root of the degree $q+1$ polynomial $$z^{q+1}-z-1 = 0.$$

The size of the error in this approximation is exponential in base depending on the next largest root $\psi$ (in absolute value), and $\sqrt{\varphi}$.

In this subsection we are instead interested in $B := \textrm{Im}(d) \subset L$, the module of boundaries. Our aim is to prove Theorem \ref{withDifferentials}. The argument will go as follows. It is known (Theorem \ref{acyclicHomology}) that the differential on $L$ is `almost acyclic'. A counting argument using the fact that ${\rm rank}(L_N) \sim \frac{1}{N} \varphi^{N}$ then shows that the rank of the module of boundaries must be asymptotically a fixed fraction of that of $L_N$.

We will first reduce to the case $r=1$ by means of the following lemma, which is proven in \cite{Boy2} as Lemma 7.10. 

\begin{lemma} \label{reduceToField} Let $\varphi: M \longrightarrow N$ be a map of $\Zpr$-modules, with $N$ free. Then ${\rm rank}_{\Zpr}(\mathrm{Im}(\varphi)) = {\rm rank}_{\Zp}(\mathrm{Im}(\varphi \otimes \Zp))$. \qed \end{lemma}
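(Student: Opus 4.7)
My plan is to analyze both sides directly via the structure theorem for $\Zpr$-modules, using freeness of $N$ to control how $\mathrm{Im}(\varphi)$ sits inside $N$. First I would decompose $\mathrm{Im}(\varphi) = A \oplus B$, where $A \cong (\Zpr)^{k_r}$ collects the $\Zpr$-summands (so $k_r = {\rm rank}_{\Zpr}(\mathrm{Im}(\varphi))$ by the definition recalled in the introduction) and $B \cong \bigoplus_{j<r}(\Zpj)^{k_j}$ collects the remaining cyclic summands, each of order strictly less than $p^r$. The goal is then to identify $\mathrm{Im}(\varphi \otimes \Zp)$ with $A/pA$.

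The key use of the hypothesis that $N$ is free is the following observation: writing $N \cong (\Zpr)^n$ and examining coordinatewise, an element $x \in N$ lies in $pN$ if and only if each coordinate is divisible by $p$, which occurs if and only if $x$ has order strictly less than $p^r$ (or is zero). Applying this inside $\mathrm{Im}(\varphi) \subseteq N$: every nonzero element of $B$ has order $< p^r$, so $B \subseteq pN$; and for $a \in A$, we have $a \in pN \Leftrightarrow \mathrm{ord}(a) < p^r \Leftrightarrow a \in pA$. Consequently $\mathrm{Im}(\varphi) \cap pN = pA \oplus B$.

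The conclusion is then a direct computation. Using the standard identification $\mathrm{Im}(\varphi \otimes \Zp) = (\mathrm{Im}(\varphi) + pN)/pN \cong \mathrm{Im}(\varphi)/(\mathrm{Im}(\varphi) \cap pN)$, we obtain
\[
\mathrm{Im}(\varphi \otimes \Zp) \;\cong\; (A \oplus B)/(pA \oplus B) \;\cong\; A/pA \;\cong\; (\Zp)^{k_r},
\]
so ${\rm rank}_{\Zp}(\mathrm{Im}(\varphi \otimes \Zp)) = k_r = {\rm rank}_{\Zpr}(\mathrm{Im}(\varphi))$, as required. The only subtlety, which I expect to be the main obstacle, is bookkeeping — ensuring that the internal decomposition $\mathrm{Im}(\varphi) = A \oplus B$ interacts correctly with the ambient embedding into $N$, so that $A \cap pN$ is computed inside $N$ and not just inside $A$. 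This is handled uniformly by the coordinatewise characterization of $pN$ above, which makes $p$-divisibility in $N$ equivalent to the intrinsic condition on order.
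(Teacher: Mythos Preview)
The paper does not actually prove this lemma; it is stated with a \qed and attributed to \cite{Boy2} (Lemma~7.10 there). So there is no in-paper argument to compare against, and your task reduces to giving a correct self-contained proof, which you have done.

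Your argument is sound. The identification $\mathrm{Im}(\varphi \otimes \Zp) = (\mathrm{Im}(\varphi)+pN)/pN$ is the standard right-exactness computation, and the crucial step---that in a free $\Zpr$-module an element lies in $pN$ if and only if its order is strictly less than $p^r$---is exactly what freeness buys. One small point worth making explicit: when you compute $(A\oplus B)\cap pN$, you implicitly use that for $a\in A$, $b\in B$ the order of $a+b$ in $N$ equals $\max(\mathrm{ord}(a),\mathrm{ord}(b))$, which holds because $A\oplus B$ is an \emph{internal} direct sum and the orders are $p$-powers. With that noted, $(A\oplus B)\cap pN = pA\oplus B$ follows and the rest is immediate. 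The freeness hypothesis on $N$ is genuinely needed (otherwise $\mathrm{Im}(\varphi)$ could sit inside $pN$ even when it contains $\Zpr$-summands), and you use it in exactly the right place.
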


Now assume $r=1$. Let $u$ be an even-dimensional class in a graded differential Lie algebra $L$ over $\Zp$ for $p \neq 2$. Following \cite{CMN}, let $$\tau_k(u) = \mathrm{ad}^{p^k-1}(u)(du),$$ and let $$\sigma_k(u) = \frac{1}{2} \sum_{j=1}^{p^k-1} \frac{1}{p}{p^k \choose j}[\mathrm{ad}^{j-1}(u)(du), \mathrm{ad}^{p^k-1-j}(u)(du)].$$

From our point of view, the point of the next theorem is that free differential Lie algebras are almost acyclic.

\begin{theorem}{\cite[Proposition 4.9]{CMN}} \label{acyclicHomology} Let $V$ be an acyclic differential $\Zp$-vector space. Write $L(V) \cong HL(V) \oplus K$, for an acyclic module $K$. If $K$ has an acyclic basis, that is, a basis $$\{x_\alpha, y_\alpha, z_\beta, w_\beta\},$$ where $\alpha$ and $\beta$ range over index sets $\mathscr{I}$ and $\mathscr{J}$ respectively, and we have $$d(x_\alpha) = y_\alpha, \mathrm{ deg}(x_\alpha) \textrm{ even,}$$ $$d(z_\beta) = w_\beta, \mathrm{ deg}(z_\beta) \textrm{ odd,}$$ then $HL(V)$ has a basis \[ \pushQED{\qed} 
\{\tau_k(x_\alpha), \sigma_k(x_\alpha)\}_{\alpha \in \mathscr{I}, k \geq 1} .\qedhere
\popQED \] \end{theorem}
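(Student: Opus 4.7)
My plan is to invoke the Poincar\'e--Birkhoff--Witt theorem to pass from $L(V)$ to its universal enveloping algebra, which is isomorphic to the tensor algebra $T(V)$. Since $V$ is acyclic, the K\"unneth formula shows that $T(V)$ has homology concentrated in degree zero, equal to $\Zp$. This determines the Poincar\'e series of $UL(V)$ as a graded commutative (super) algebra, and via PBW determines the Poincar\'e series of $HL(V)$, so a counting argument will finish the proof once enough independent cycles have been produced.

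The second step is to write these cycles down explicitly. For each even generator $x_\alpha$ and each $k \geq 1$, I would verify by a direct calculation using the Leibniz rule for $d$ on iterated brackets that $\tau_k(x_\alpha) = \mathrm{ad}^{p^k-1}(x_\alpha)(dx_\alpha)$ is a cycle, and similarly for $\sigma_k(x_\alpha)$. The $\sigma_k$ calculation is where the characteristic-$p$ hypothesis really bites: the formula makes sense because $\frac{1}{p}\binom{p^k}{j}$ is integral for $1 \leq j \leq p^k - 1$, and the same divisibility pattern, combined with the Jacobi identity, forces the differential of $\sigma_k(x_\alpha)$ to vanish after cancellation. Linear independence of these classes in homology can be checked by examining their images in the enveloping algebra, where $\tau_k(x_\alpha)$ is closely related to the $p^k$-th power $x_\alpha^{p^k}$ and the $\sigma_k(x_\alpha)$ record the failure of $(-)^{p^k}$ to be a derivation.

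The main obstacle, and the reason this cannot be done one acyclic pair at a time, is that free Lie algebras are \emph{not} additive in their generators: $L(V_1 \oplus V_2) \neq L(V_1) \oplus L(V_2)$ in general, so one cannot trivially reduce to the case of a single pair. The K\"unneth argument nonetheless works on the tensor algebra side, and PBW transports the resulting dimension count back to $L(V)$, which is what makes the counting match. Finally, one needs to explain why the odd pairs $(z_\beta, w_\beta)$ contribute no basis elements: the free Lie subalgebra on an odd acyclic pair turns out to be acyclic for $p \neq 2$, by a bracket-length induction together with the characteristic-$p$ identities used above, so only the even generators $x_\alpha$ produce homology.
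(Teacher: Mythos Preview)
This theorem is not proved in the paper: it is quoted from \cite{CMN} and the statement ends with a \qed, so there is no proof here to compare against.

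That said, your sketch has a genuine gap in the counting step. Knowing $H_*(T(V)) \cong \Zp$ (which is correct, by K\"unneth) does \emph{not} determine the Poincar\'e series of $HL(V)$ via Poincar\'e--Birkhoff--Witt. Over $\Zp$, PBW only identifies $S(L(V))$ with the associated graded of $UL(V)=T(V)$ for the length filtration; what this yields is a spectral sequence $H_*(S(L(V))) \Rightarrow H_*(T(V)) = \Zp$, and the $E_1$-page is itself large (for a single even acyclic pair one computes $H_*(\Zp[x]\otimes\Lambda(dx)) \cong \Zp[x^p]\otimes\Lambda(x^{p-1}dx)$, not $\Zp$). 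Indeed $HL(V)$ is infinite-dimensional---one $\tau_k(x_\alpha)$ and one $\sigma_k(x_\alpha)$ per even generator for every $k\geq 1$---while $H_*(UL(V))$ is one-dimensional, so no formula of the shape ``$H_*(UL)=U(H_*L)$'' or a naive Poincar\'e-series transfer can hold. Extracting the size of $HL(V)$ from the collapse of this spectral sequence is essentially equivalent to proving the theorem, not a shortcut around it.

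Your lower-bound ingredients are on the right track: verifying that the $\tau_k$ and $\sigma_k$ are cycles and detecting them in $T(V)$ via the $p^k$-th powers $x_\alpha^{p^k}$ is the correct idea, and your observation that an odd acyclic pair contributes trivially to $H_*(S(K))$ is correct. What is missing is the matching \emph{upper} bound on $\dim H_*(L(V))$. Cohen--Moore--Neisendorfer obtain this not by a dimension count through the enveloping algebra but by a structural inductive decomposition of $L(V)$ as a differential Lie algebra (their \S4), which is where the real work lies.
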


The theorem implies that the differential on $L$ can be modified slightly to make it acyclic. Namely, define a new differential $\overline{d} : L(V) \to L(V)$ by setting $\overline{d} = d$ on $K$, and letting  $\overline{d}(\tau_k(x_\alpha)) = \sigma_k(x_\alpha)$, $\overline{d}(\sigma_k(x_\alpha)) = 0$. Of course, $\overline{d}$ will no longer satisfy the Leibniz rule, but it will still be a vector space endomorphism of degree $-1$ which satisfies $\overline{d}^2=0$.

Now let $\overline{B} := \mathrm{Im}(\overline{d}) \subset L$, and let $\sigma \subset L$ be the subspace spanned by the elements $\sigma_k(x)$, for some even degree $x \in L$ and $k \in \mathbb{Z}^+$. By definition of $\overline{d}$ we then have the following corollary.

\begin{corollary} \label{differentialCorrection} We have $\overline{B}_N \cong B_N \oplus \sigma_N$. \qed \end{corollary}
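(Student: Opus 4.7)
\emph{Proof proposal.} The plan is to read off $\overline{B}$ directly from the definition of $\overline{d}$ applied to the decomposition $L \cong HL(V) \oplus K$ of Theorem~\ref{acyclicHomology}, and then compare it with $B$. By linearity,
\[
\overline{B} = \overline{d}(HL(V)) + \overline{d}(K).
\]
The defining formulas $\overline{d}(\tau_k(x_\alpha)) = \sigma_k(x_\alpha)$ and $\overline{d}(\sigma_k(x_\alpha)) = 0$, together with the fact that $\{\tau_k(x_\alpha),\sigma_k(x_\alpha)\}$ is a basis of $HL(V)$, give $\overline{d}(HL(V)) = \sigma$. Since $\overline{d}$ agrees with $d$ on $K$ we also have $\overline{d}(K) = d(K)$, so $\overline{B} = \sigma + d(K)$.

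To replace $d(K)$ by the full image $B = d(L)$, I want to argue that the basis elements $\tau_k(x_\alpha)$ and $\sigma_k(x_\alpha)$ are genuine $d$-cycles in $L$. For $\sigma_k(x_\alpha)$ this is built into the construction, and for $\tau_k(x_\alpha)$ it follows from the Leibniz rule combined with the vanishing modulo $p$ of the relevant binomial coefficient; both facts are contained in \cite[Proposition 4.9]{CMN}. Hence $d$ vanishes on $HL(V)$ viewed as a submodule of $L$, and so $B = d(L) = d(K)$.

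It remains to see that $B + \sigma$ is an internal direct sum. The acyclic basis relations $d(x_\alpha) = y_\alpha$ and $d(z_\beta) = w_\beta$ show that $d$ carries $K$ into itself, so $B \subset K$; meanwhile $\sigma \subset HL(V)$ by construction. Since $L = HL(V) \oplus K$ is an internal direct sum, $B \cap \sigma \subset K \cap HL(V) = 0$, and restricting to the degree-$N$ component yields $\overline{B}_N \cong B_N \oplus \sigma_N$. The one place requiring attention is the claim that $\tau_k(x_\alpha)$ is a literal cycle (so that $d|_{HL(V)}$ really is zero rather than merely landing somewhere in $K$); granting this, everything else is a formal unwinding of the definition of $\overline{d}$ and of the splitting in Theorem~\ref{acyclicHomology}.
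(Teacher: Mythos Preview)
Your argument is correct and is exactly the elaboration that the paper suppresses when it writes ``by definition of $\overline{d}$'': you decompose $\overline{d}(L)$ along $HL(V)\oplus K$, identify the two pieces as $\sigma$ and $d(K)$, and then use that $K$ is $d$-stable and that the explicit representatives $\tau_k(x_\alpha),\sigma_k(x_\alpha)$ are genuine $d$-cycles to get $d(K)=B$ and $B\cap\sigma=0$. The one substantive input you flag---that $\tau_k(x_\alpha)$ and $\sigma_k(x_\alpha)$ are honest cycles in $L\otimes\Zp$---is indeed the content of \cite[Proposition~4.9]{CMN} (your phrase ``built into the construction'' for $\sigma_k$ is slightly generous, since this also needs a short computation there, but the attribution is right).
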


The next lemma justifies the approximation by providing a crude upper bound on $\sigma_N$.

\begin{lemma} \label{sigmaIsSmall} We have the bound $$\dim_{\Zp} \sigma_N \leq c_1 \cdot N \varphi^{\frac{N}{p}},$$ where $c_1 = 2(q+2)\varphi^{\frac{2}{p}}$. \end{lemma}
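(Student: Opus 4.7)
The first step is to compute the degree of $\sigma_k(u)$ for an even-degree class $u$. Since $\mathrm{ad}^{j-1}(u)(du)$ has degree $(j-1)|u|+(|u|-1) = j|u|-1$, each bracket $[\mathrm{ad}^{j-1}(u)(du), \mathrm{ad}^{p^k-1-j}(u)(du)]$ has degree $p^k|u|-2$, and hence $|\sigma_k(u)|=p^k|u|-2$. So for $\sigma_k(x)$ to lie in $L_N$ we need $|x|=(N+2)/p^k$, which forces this quantity to be a positive even integer. The spanning description of $\sigma$ then yields the basic upper bound
\[
\dim_{\Zp}\sigma_N \;\leq\; \sum_{\substack{k\geq 1\\ (N+2)/p^k\in 2\mathbb{Z}^+}} \dim_{\Zp} L_{(N+2)/p^k},
\]
since each $x$ contributing to $\sigma_N$ via $\sigma_k$ must live in $L_{(N+2)/p^k}$.

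Next, I would apply Theorem \ref{withoutDifferentials} (with $q_\ell=q+1$) to each summand. Using $|\psi|\leq \varphi$ to simplify crudely, the bound there implies
\[
\dim_{\Zp} L_M \;\leq\; (q+2)\bigl(\tfrac{1}{M}\varphi^M + \varphi^{M/2}\bigr).
\]
Substituting $M=(N+2)/p^k$ and factoring $\varphi^{(N+2)/p^k}=\varphi^{2/p^k}\varphi^{N/p^k}$, the $k=1$ contribution comes out to
\[
(q+2)\Bigl(\tfrac{p}{N+2}\,\varphi^{2/p}\varphi^{N/p} + \varphi^{1/p}\varphi^{N/(2p)}\Bigr),
\]
and this is the term I expect to dominate.

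The rest of the argument is then a straightforward domination estimate. For $k\geq 2$ the exponent $(N+2)/p^k$ is at most $(N+2)/p^2$, which is exponentially smaller than $(N+2)/p$, while the number of valid $k$'s is at most $\log_p(N+2)$; so $\sum_{k\geq 2}$ is swamped by the $k=1$ term. Comparing to the target $c_1 N\varphi^{N/p} = 2(q+2)\varphi^{2/p}\cdot N\varphi^{N/p}$, the factor $N\varphi^{2/p}$ on the right easily absorbs both pieces of the $k=1$ term (since $p/(N+2)\leq 2N$ and $\varphi^{-N/(2p)}\leq N\varphi^{-2/p}$ with plenty of slack) as well as the $k\geq 2$ tail.

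The main (and only) obstacle here is accounting: there is no conceptual difficulty, only the need to keep track of the error terms emerging from Theorem \ref{withoutDifferentials} across the different values of $k$ and verify that the deliberately generous prefactor $N\varphi^{2/p}$ in the statement absorbs all of them uniformly. In particular, the bound is very loose; one could give a substantially sharper estimate, but the stated form is sufficient for its use in the applications that follow.
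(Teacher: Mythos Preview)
Your proof is correct and follows essentially the same route as the paper: compute $|\sigma_k(x)| = p^k|x|-2$, bound $\dim_{\Zp}\sigma_N$ by a sum of $\dim_{\Zp} L_M$ over the admissible $M=(N+2)/p^k$, and then estimate each $\dim_{\Zp} L_M$ via Theorem~\ref{withoutDifferentials} using $|\psi|\leq\varphi$. The only tactical difference is in the final step: the paper drops the $\tfrac{1}{M}$ factors outright (so that $\dim_{\Zp} L_M \leq 2(q+2)\varphi^M$, a monotone bound), crudely bounds the number of summands by $N$, and multiplies by the largest term $2(q+2)\varphi^{(N+2)/p}$ to obtain the result in one line; your version retains the $\tfrac{1}{M}$, splits off $k=1$, and then argues by domination, which is sharper but needs the extra accounting you describe.
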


\begin{proof} By definition, $\sigma_N$ is spanned by classes $\sigma_k(x_\alpha)$, and we have $\deg(\sigma_k(x_\alpha)) = k \deg(x_\alpha)-2$. We therefore have $$\dim_{\Zpr} \sigma_N \leq \sum_{\substack{M \leq N \\ p^k M - 2 =N}} \dim_{\Zp} L_M \leq \sum_{\substack{M \leq N \\ p^k M - 2 =N}} (\frac{1}{M} \varphi^M + \frac{q+1}{M} |\psi|^{M} +  \varphi^{\frac{M}{2}} + (q+1)|\psi|^{\frac{M}{2}})$$
$$\leq \sum_{\substack{M \leq N \\ p^k M - 2 =N}} ( (q+2) \varphi^{M} +(q+2)\varphi^{\frac{M}{2}}) \leq \sum_{\substack{M \leq N \\ p^k M - 2 =N}} 2(q+2) \varphi^{M}.$$
by Theorem \ref{withoutDifferentials} (we use $|\psi| < \varphi$, and then drop the factors of $\frac{1}{M}$, to obtain a bound which is strictly increasing even for small $M$). This summation contains fewer than $N$ terms, and since the value of a given term is increasing in $M$, the size of the largest term is controlled by $M = \frac{N+2}{p^k} \leq \frac{N+2}{p}$, so $$\dim_{\Zp} \sigma_N \leq N \cdot 2(q+2) \varphi^{\frac{N+2}{p}},$$ as required. \end{proof}

We next estimate the size of $\dim \overline{B}_N$.

\begin{lemma} \label{BBarIsBig} Let $\psi$ be the next largest (in absolute value) root of $z^{q+1}-z-1$ after $\varphi$. We have $$\dim_{\Zp} \overline{B}_N \geq  (1-\frac{N}{N-1} \frac{1}{\varphi}) \cdot \frac{1}{N} \varphi^N - \kappa \lvert \psi \rvert^N - c_2 \varphi^{\frac{N}{2}},$$ where $\kappa = (q+1)(1+ \frac{1}{\lvert \psi \rvert})$ and $c_2 = (q+2)(1+ \frac{1}{\sqrt{\varphi}}) \leq 2(q+2)$. \end{lemma}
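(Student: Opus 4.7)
The plan is to leverage the acyclicity of $\overline{d}$ to obtain a rank-nullity identity for $\dim \overline{B}_N$, and then apply the asymptotic estimates of Theorem \ref{withoutDifferentials} degreewise.

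Since $\overline{d}$ is acyclic on $L$ by construction (the differential on $K$ is acyclic by hypothesis, and on the $\tau$--$\sigma$ basis of $HL$ the modification is manifestly acyclic), we have $\mathrm{Ker}(\overline{d}) = \mathrm{Im}(\overline{d}) = \overline{B}$. Applying rank-nullity to the component $\overline{d}|_{L_N} : L_N \to L_{N-1}$, whose kernel is $\overline{B}_N$ and whose image is $\overline{B}_{N-1}$, gives the identity
$$\dim_{\Zp} L_N = \dim_{\Zp} \overline{B}_N + \dim_{\Zp} \overline{B}_{N-1}.$$
Since $\overline{B}_{N-1} \subseteq L_{N-1}$, rearranging yields the key lower bound
$$\dim_{\Zp} \overline{B}_N \geq \dim_{\Zp} L_N - \dim_{\Zp} L_{N-1}.$$

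I would then apply Theorem \ref{withoutDifferentials} with $g = \gcd(q, q+1) = 1$ and $q_\ell = q+1$, using the lower bound on $\dim_{\Zp} L_N$ and the upper bound on $\dim_{\Zp} L_{N-1}$. The leading contributions combine as
$$\frac{1}{N}\varphi^N - \frac{1}{N-1}\varphi^{N-1} = \left(1 - \frac{N}{N-1}\cdot\frac{1}{\varphi}\right) \cdot \frac{1}{N}\varphi^N,$$
matching precisely the main term in the claim.

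It remains to collect the error contributions. The $|\psi|^N$ and $|\psi|^{N-1}$ pieces (using $\frac{1}{N}, \frac{1}{N-1} \leq 1$) contribute at most $(q+1)|\psi|^N + (q+1)|\psi|^{N-1} = (q+1)(1 + \frac{1}{|\psi|})|\psi|^N = \kappa |\psi|^N$. The four remaining error terms $\varphi^{N/2}, \varphi^{(N-1)/2}, (q+1)|\psi|^{N/2}, (q+1)|\psi|^{(N-1)/2}$ are absorbed, using $|\psi| \leq \varphi$, into $(q+2)\varphi^{N/2} + (q+2)\varphi^{(N-1)/2} = (q+2)(1 + \frac{1}{\sqrt{\varphi}}) \varphi^{N/2} = c_2 \varphi^{N/2}$. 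The main conceptual step is the rank-nullity identity enabled by the switch from $d$ to the acyclic $\overline{d}$; once that is in hand the rest is an elementary regrouping of the error terms supplied by Theorem \ref{withoutDifferentials}.
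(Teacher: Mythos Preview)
Your proof is correct and follows essentially the same route as the paper: acyclicity of $\overline{d}$ gives $\dim \overline{B}_N \geq \dim L_N - \dim L_{N-1}$ (you phrase this via rank--nullity, the paper via the First Isomorphism Theorem; these are the same argument), and then Theorem~\ref{withoutDifferentials} supplies the two-sided bounds that combine into the stated inequality. Your explicit bookkeeping of the error terms is exactly what the paper leaves implicit in its final sentence.
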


\begin{proof} Since $\overline{d}$ is acyclic, we have $\overline{B}_N = \mathrm{Ker}(\overline{d}: L_N \to L_{N-1})$. The First Isomorphism Theorem then gives that $\faktor{L_N}{\overline{B}_N} \cong \overline{B}_{N-1}$, and since $\overline{B}_{N-1} \subset L_{N-1}$ we get $$\dim_{\Zp} \overline{B}_N \geq \dim_{\Zp} L_N - \dim_{\Zp}L_{N-1}.$$

Theorem \ref{withoutDifferentials} gives (since $g=1$ and $|\psi| < \varphi$) $$\dim_{\Zp}L_{N-1} \leq \frac{1}{N-1}\varphi^{N-1} + \frac{q+1}{N-1} \lvert \psi \rvert^{N-1} + (q+2) \varphi^{\frac{N-1}{2}},$$ and $$\dim_{\Zp}L_{N} \geq \frac{1}{N}\varphi^{N} - \frac{q+1}{N} \lvert \psi \rvert^{N} - (q+2)\varphi^{\frac{N}{2}}.$$

Combining these inequalities gives the result. \end{proof}

We are now ready to state and prove the main theorem of this subsection.

\begin{theorem} \label{withDifferentials} Let $L \otimes \Zpr=L(x,d x) \otimes \Zpr = L(x,y) \otimes \Zpr$ be the free differential graded Lie algebra over $\Zpr$ on two generators $x$ and $y$ satisfying $y= d x$. Let $q = \deg (y)$, so that $\deg (x) = q+1$. Let $B = \textrm{Im} (d) \subset L \otimes \Zpr$ be the submodule of boundaries. Then we have the bound $$ {\rm rank}(B_N) \geq (1 - \frac{N}{N-1} \frac{1}{\varphi}) \cdot \frac{1}{N} \varphi^{N} - c N \varphi^{\frac{N}{2}} -  \kappa \lvert \psi \rvert^N,$$ where $\varphi > 1$ is the unique positive real root of the degree $n$ polynomial $$z^{q+1} - z - 1 = 0,$$ $\psi$ is the next largest root in absolute value, $c= 2(q+2)(1+ \varphi)$, and $\kappa = (q+1)(1+ \frac{1}{\lvert \psi \rvert})$. We have the bounds $2^{\frac{1}{q+1}}< \varphi < 1 + \frac{1}{q}$. \end{theorem}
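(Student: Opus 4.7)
The plan is to assemble the lemmas already established in this subsection. The first step is to reduce to the case $r=1$. By Hilton's theorem, $L$ is free as a $\mathbb{Z}$-module, so $L \otimes \Zpr$ is free as a $\Zpr$-module. Applying Lemma \ref{reduceToField} to the differential $d : L \otimes \Zpr \to L \otimes \Zpr$ then gives $\mathrm{rank}_{\Zpr}(B_N) = \dim_{\Zp}((B \otimes \Zp)_N)$. Since $d$ commutes with reduction mod $p$, the right-hand side is exactly the dimension of the module of boundaries for $L \otimes \Zp$, so I may assume $r=1$ and work over the field $\Zp$.

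Next, I would combine Corollary \ref{differentialCorrection}, which says $\dim_{\Zp} B_N = \dim_{\Zp} \overline{B}_N - \dim_{\Zp} \sigma_N$, with the bounds from Lemmas \ref{BBarIsBig} and \ref{sigmaIsSmall} to obtain
$$\dim_{\Zp} B_N \geq \Big(1 - \frac{N}{N-1}\frac{1}{\varphi}\Big)\frac{1}{N}\varphi^N - \kappa|\psi|^N - c_2 \varphi^{N/2} - c_1 N \varphi^{N/p}.$$
It then remains to collapse the two error terms $c_2 \varphi^{N/2}$ and $c_1 N \varphi^{N/p}$ into a single $c N \varphi^{N/2}$ with $c = 2(q+2)(1+\varphi)$. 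Since $p \geq 3$ (as $p$ is an odd prime) and $\varphi > 1$, we have $\varphi^{N/p} \leq \varphi^{N/2}$, so their sum is at most $(c_2 + c_1 N) \varphi^{N/2} \leq (c_1 + c_2) N \varphi^{N/2}$ for $N \geq 1$. Using $2/p \leq 1$ and $\varphi > 1$, we get $c_1 = 2(q+2)\varphi^{2/p} \leq 2(q+2)\varphi$, and since $\sqrt{\varphi} > 1$, $c_2 = (q+2)(1 + 1/\sqrt{\varphi}) \leq 2(q+2)$. Adding, $c_1 + c_2 \leq 2(q+2)(1 + \varphi) = c$, which finishes the main inequality.

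Finally, for the estimates $2^{1/(q+1)} < \varphi < 1 + 1/q$: since $P(z) = z^{q+1} - z - 1$ satisfies $P(0) = -1$ and $P(z) \to \infty$, and has $\varphi$ as its unique positive real root, $P$ is negative on $(0, \varphi)$ and positive on $(\varphi, \infty)$. I compute $P(2^{1/(q+1)}) = 2 - 2^{1/(q+1)} - 1 < 0$, giving the lower bound, and $P(1+1/q) \geq 2(1+1/q) - (1+1/q) - 1 = 1/q > 0$ (using the elementary inequality $(1+1/q)^q \geq 2$ for $q \geq 1$), giving the upper bound.

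Given the preparations, no significant obstacle remains at this stage; the real difficulty in this subsection was already handled in Lemma \ref{BBarIsBig}, which depended on the Cohen--Moore--Neisendorfer description of the homology of a free differential Lie algebra (Theorem \ref{acyclicHomology}). The theorem itself is essentially bookkeeping: combining the pieces and repackaging the constants.
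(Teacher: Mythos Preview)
Your proof is correct and follows essentially the same approach as the paper: reduce to $r=1$ via Lemma~\ref{reduceToField}, then combine Corollary~\ref{differentialCorrection} with Lemmas~\ref{sigmaIsSmall} and~\ref{BBarIsBig}. You give more detail than the paper does in collapsing the error terms (the paper simply writes ``$c=c_1+c_2$'') and in verifying the bounds $2^{1/(q+1)}<\varphi<1+1/q$, but the route is identical.
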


\begin{proof} By Lemma \ref{reduceToField}, it suffices to prove the theorem in the case $r=1$. By Lemma \ref{differentialCorrection} we have $$\dim_{\Zp} B_N \geq \dim_{\Zp} \overline{B}_N - \dim_{\Zp} \sigma_N.$$ Combining Lemmas \ref{sigmaIsSmall} and \ref{BBarIsBig} ($c=c_1+c_2$) then gives the result. \end{proof}

\section{Topology} \label{topologySection}

\subsection{Homology}

We now prove Theorem \ref{homologyResultNoSuspension}.

\begin{proof}[Proof of Theorem \ref{homologyResultNoSuspension}] In the proof of Theorem 1.5 of \cite{Boy2} it is shown that there exists a commutative diagram (the details of the definitions of the maps need not concern us here):

\begin{center}
\begin{tabular}{c}
\xymatrix{
L'(x,y) \ar^{\theta \circ d}[d] \ar^{\beta^r \circ \Phi_\pi^{r,r}}[r] & \pi_*(\Omega P^{n+1}(p^r)) \ar^(0.57){(\Omega \mu)_*}[r] \ar^{h \circ \rho^s}[d] & \pi_*(\Omega Y) \ar^{h \circ \rho^s}[d] \\
L(x,y) \otimes \Zps \ar^{\Phi_H^{r,s}}[r] & H_*(\Omega P^{n+1}(p^r); \Zps) \ar^(0.57){(\Omega \mu)_*}[r] & H_*(\Omega Y; \Zps).
}
\end{tabular}
\end{center}

In the diagram, $L(x,dx) \otimes \Zps$ is the free differential Lie algebra (with $dx=y$ , $\deg(x) = q+1$, $\deg(y) = q$). The top-left entry $L'(x,dx)$ is a certain module over $\Zpr$ which is `almost' a free differential Lie algebra.

We now use various results from \cite{Boy2}. By the remark immediately before Corollary 8.9 of that paper, the image of the left hand vertical map $\theta \circ d$ is precisely the module of boundaries $BL$. By Lemma 9.6 the map $\Phi_H^{r,s}$ is an injection, and the induced map on homology, $(\Omega \mu)_*$, is an injection by assumption. It follows by commutativity that the image in the bottom-right, $I := \mathrm{Im}(h \circ \rho^s \circ (\Omega \mu)_* \circ \beta^r \circ \Phi_\pi^{r,r})$, is isomorphic to $BL$.

The point is then that the homotopy groups of $Y$ surject onto $I$, hence must be just as large. More precisely, we obtain that $$\sum_{t=s}^r {\rm rank}_{\Zpt}(\pi_N(\Omega Y)) \geq {\rm rank}_{\Zps}(I_N) =  {\rm rank}_{\Zps}(BL_N)$$ by Lemma 7.8 of \cite{Boy2} applied to the part of the diagram consisting of

\begin{center}
\begin{tabular}{c}
\xymatrix{
\pi_N(\Omega Y) \ar[dr] & \\
(L'(x,y))_N \ar[r] \ar[u] & H_N(\Omega Y; \Zps).
}
\end{tabular}
\end{center}

The loops on $Y$ is just a degree shift on homotopy groups, so the result follows by Theorem \ref{withDifferentials} of this paper.
\end{proof}

\subsection{$K$-theory}

In this subsection, the following linear inequality relating integers $j$ and $N$ will arise often. We will refer to it as Condition $(\ref{eqn})$. Here, $X$ is a fixed space, $\mathrm{conn}(X)$ is the $p$-local connectivity of $X$, and $\dim(X)$ is the largest $d$ for which $H^d(X; \mathbb{Q}) \neq 0$.

\begin{equation} j > \frac{1}{2(p-1)}(\frac{\dim(X)+1}{\mathrm{conn}(X)+1}-1)N+(\frac{\dim(X)+1}{\mathrm{conn}(X)+1})(\mathrm{conn}(X)+2)-1), \label{eqn} \tag{*} \end{equation}

The next theorem refines and slightly generalises Theorem 1.4 of \cite{Boy1}.

\begin{theorem} \label{basicKTheorem} Let $p$ be an odd prime, and let $X$ be a path connected space having the $p$-local homotopy type of a finite $CW$-complex. Suppose that there exists a map $$\mu : \bigvee_{i=1}^\ell \bigvee_{j=1}^{m_i} S^{q_i+1} \to \Sigma X$$
with $1 \leq q_1 < q_2 < \dots < q_\ell$, such that the map $$\widetilde{K}^*(\Sigma X) \otimes \Zp \xrightarrow{\mu^*} \widetilde{K}^*(\bigvee_{i=1}^\ell \bigvee_{j=1}^{m_i} S^{q_i+1}) \otimes \Zp \cong \bigoplus_{i=1}^\ell \bigoplus_{j=1}^{m_i} \Zp$$
is a surjection.

Then for any $N$, $j$ such that $$j > \frac{1}{2(p-1)}(\frac{\dim(X)+1}{\mathrm{conn}(X)+1}-1)N+(\frac{\dim(X)+1}{\mathrm{conn}(X)+1})(\mathrm{conn}(X)+2)+1)$$ (i.e. such that Condition (\ref{eqn}) holds) we have $$\sum_{t=1}^\infty \mathrm{rank}_{\Zpt}(\pi_{N+2j(p-1)-1}(\Omega \Sigma X)) \geq {\rm rank}_{\Zpr}(L_N \otimes \Zpr),$$ where $L$ is as in Theorem \ref{withoutDifferentials} (the free Lie algebra on generators corresponding to the spheres in the wedge), $\mathrm{conn}(X)$ is the $p$-local connectivity of $X$, $\dim(X)$ is the dimension of $X$ as measured by rational cohomology, and $g=\gcd(q_1, \dots , q_\ell)$.  \end{theorem}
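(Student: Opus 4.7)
My plan is to adapt the proof of Theorem 1.4 of \cite{Boy1} (itself modelled on Selick's argument in \cite{Sel}), making the quantitative content explicit. The strategy combines the James splitting $\Sigma \Omega \Sigma X \simeq \bigvee_{n \geq 1} \Sigma X^{\wedge n}$, the Hilton--Milnor decomposition, and Adams-operation detection of $p$-torsion in the homotopy of spheres, in order to produce a copy of the free Lie algebra $L$ (the one in Theorem \ref{withoutDifferentials}) inside the $p$-torsion homotopy groups of $\Omega \Sigma X$.

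First I would loop the map $\mu$ and invoke Hilton--Milnor, which gives $\Omega \bigvee_{i,j} S^{q_i+1} \simeq \prod_\alpha \Omega S^{N_\alpha+1}$ indexed by a Hall basis of $L$; a Hall basis element of bracket-weight $N$ contributes a factor $\Omega S^{N+1}$, producing, after composition with $\Omega \mu$, a candidate map $\Omega S^{N+1} \to \Omega \Sigma X$ for each such Hall basis element of $L_N$. The $K$-theoretic surjectivity of $\mu$, combined with K\"unneth applied to the wedge summands $\Sigma X^{\wedge n}$ in the James splitting, shows that these candidates remain nonzero on $\widetilde{K}^*(-) \otimes \Zp$ after passage to $\Omega \Sigma X$.

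Second, I would apply Selick's eigenvalue construction. Adams operations $\psi^k$ act on $\widetilde{K}^*(S^{2m+1})$ by $k^m$, and via the $e$-invariant they detect $p$-torsion classes in $\pi_{2m+2j(p-1)-1}(S^{2m+1})$. Transferring along the maps above yields, for every Hall basis element of $L_N$, a $p$-torsion homotopy class of $\Omega \Sigma X$ in degree $N+2j(p-1)-1$, provided the Adams eigenvalue coming from the sphere factor remains $p$-adically distinguishable from the other eigenvalues appearing in $\widetilde{K}^*(\Sigma X^{\wedge n})$. The latter eigenvalues are constrained in terms of $\mathrm{conn}(X)$ (the smallest nontrivial degree) and $\dim(X)$ (the largest), and Condition (\ref{eqn}) is precisely the numerical inequality guaranteeing that the sphere eigenvalue dominates this range, so detection survives.

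The desired rank bound then follows because distinct Hall basis elements yield linearly independent $p$-torsion contributions in $\bigoplus_t \pi_{N+2j(p-1)-1}(\Omega \Sigma X) \otimes \Zpt$, giving at least ${\rm rank}_{\Zpr}(L_N \otimes \Zpr)$ such classes. The main obstacle is the combinatorial bookkeeping of the ``lag'' effect emphasised in the introduction: one must verify that Condition (\ref{eqn}) is correctly calibrated for \emph{all} Hall basis elements simultaneously, not only the generators, so that Adams-operation detection is uniformly maintained through the James and Hilton--Milnor splittings and no spurious cancellations occur. Once this diagrammatic and numerical bookkeeping is in place, the statement of the theorem is an immediate consequence.
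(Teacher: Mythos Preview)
Your proposal is correct and follows essentially the same route as the paper: both treat the theorem as a quantitative restatement of Theorem~1.4 of \cite{Boy1}, which in turn is Selick's Adams-operation argument run through the James and Hilton--Milnor splittings. The paper's proof is more compressed---it simply invokes Construction~7.15 and Theorem~7.16 of \cite{Boy1} to obtain a diagram in which $L_N \otimes \Zpr$ injects into a detection module $E_{N+2j(p-1)}$ via the homotopy of $\Omega\Sigma X$, then applies the same rank-counting lemma used in the homology case---whereas you unpack what that machinery is doing.

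One point to sharpen: you assert that Condition~(\ref{eqn}) is ``precisely the numerical inequality'' needed, but in the paper this requires a short derivation. The actual detection criterion from \cite{Boy1} is that some integer $\ell$ satisfy $\ell^{j(p-1)+\frac{N-1}{2}} > \lambda_\ell^k$ with $k = \lceil \frac{N+1}{\mathrm{conn}(X)+1}\rceil$; the Adams--Atiyah result \cite{AA} supplies $\lambda_\ell = \ell^{\lceil \dim(X)/2 \rceil}$, and Condition~(\ref{eqn}) is then obtained by rearranging and using $\lceil z/2 \rceil \leq (z+1)/2$. Your sketch hides both the Adams--Atiyah input (which is what ties $\dim(X)$ to the eigenvalue range) and this final simplification; once you make those explicit, the argument is complete.
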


\begin{proof} This is essentially a more careful restatement of Theorem 1.4 of \cite{Boy1}. Some of the arguments of that paper are given only for a wedge of two spheres, but all of them apply verbatim to any finite wedge. Construction 7.15 of that paper gives (in slightly different language) a diagram of the form
\begin{center}
\begin{tabular}{c}
\xymatrix{
\pi_N(\Omega \Sigma X) \ar[dr] & \\
L_N \otimes \Zpr \ar[r] \ar[u] & E_{N+2j(p-1)}
}
\end{tabular}
\end{center} for some module $E_*$ whose definition need not concern us.

Theorem 7.16 of that paper than says that the horizontal map is an injection, and hence, just as in the proof of Theorem \ref{homologyResultNoSuspension}, the conclusion holds, provided that there exists some $\ell \in \mathbb{Z}_{\geq 0}$ such that $\ell^{j(p-1)+\frac{N-1}{2}} > \lambda_\ell^k$, for an integer $k$ which may be taken to be $\lceil \frac{N+1}{\mathrm{conn}(X)+1} \rceil.$

The inequality therefore rearranges to $j > \frac{1}{p-1}(\lceil \frac{N+1}{\mathrm{conn}(X)+1} \rceil \frac{\log(\lambda_\ell)}{\log(\ell)}- \frac{N-1}{2}).$ In \cite{AA}, it is shown that $\lambda_\ell = \ell^{\lceil \frac{\dim(X)}{2} \rceil}$, so we may simplify to $$ j > \frac{1}{p-1}(\lceil \frac{N+1}{\mathrm{conn}(X)+1} \rceil \lceil \frac{\dim(X)}{2} \rceil - \frac{N-1}{2}), $$ which is implied by Condition (\ref{eqn}), using the fact that for an integer $z$ we have $\lceil \frac{z}{2}\rceil \leq \frac{z+1}{2}$. This completes the proof. \end{proof}

The next step is a simple application of Bezout's Lemma.

\begin{lemma} \label{BigBezout} Let $\alpha, \beta \in \mathbb{Z}$ with $\alpha, \beta  > 0$, and let $a,b \in \mathbb{R}$ with $a>0$. Consider the set of linear combinations $$S_n=\{n \alpha + j \beta \ \mid \ j \in \mathbb{Z}_{\geq 0}, \ j > an+b \} \subset \mathbb{Z}.$$ Let $g' = \gcd(\alpha, \beta )$. There exists a constant $B$, independent of $n$, such that for each $n$, all multiples of $g'$ which are at least $\min(S_n)+B$ are contained in $S_i$ for some $i$ which is close to $n$ in the sense that $n \leq i < n+\beta(\beta + 1)$. Furthermore, there exists a suitable $B$ satisfying the bound $B \leq \beta^2 (\alpha + a (1 + \beta))+ \beta$, and hence any $B \geq \beta^2 (\alpha + a (1 + \beta)) + \beta$ is also suitable. \end{lemma}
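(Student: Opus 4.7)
The plan is to compute $\min(S_n)$ explicitly, then use Bezout's Lemma to locate any qualifying $m$ inside some $S_i$.

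First, I would note that $\min(S_n) = n\alpha + j_n \beta$, where $j_n$ is the smallest non-negative integer strictly greater than $an+b$. In particular $j_n > an+b$ always, with equality never occurring. Given a multiple $m$ of $g'$ with $m \geq \min(S_n) + B$, the goal is to produce $i \in [n, n+\beta(\beta+1))$ and $j \in \mathbb{Z}_{\geq 0}$ with $m = i\alpha + j\beta$ and $j > ai + b$, which is precisely the condition for $m \in S_i$.

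Second, I would use that $\gcd(\alpha, \beta) = g'$ to observe that $\alpha$ generates the subgroup of multiples of $g'$ in $\mathbb{Z}/\beta\mathbb{Z}$, which has order $\beta/g'$. Hence, as $i$ ranges over any $\beta/g'$ consecutive integers, the residues $i\alpha \bmod \beta$ cycle exactly once through the multiples of $g'$ modulo $\beta$. Since $m$ is itself a multiple of $g'$, there is a (unique) $i_0 \in [n, n+\beta/g')$ with $m \equiv i_0 \alpha \pmod{\beta}$; the bound $\beta/g' \leq \beta < \beta(\beta+1)$ places $i_0$ in the required range. Setting $j := (m - i_0 \alpha)/\beta$ then automatically produces an integer.

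Third, I would verify the inequalities $j \geq 0$ and $j > ai_0 + b$. Substituting $m \geq n\alpha + j_n\beta + B$ and using $j_n > an + b$ strictly, a short rearrangement yields
\[ j - (ai_0 + b) \;\geq\; (j_n - an - b) \;+\; \frac{B - (i_0 - n)(\alpha + a\beta)}{\beta}, \]
which is positive whenever $B \geq (\beta-1)(\alpha + a\beta)$, since $i_0 - n \leq \beta/g' - 1 \leq \beta - 1$ and $j_n - an - b > 0$. A parallel (weaker) estimate yields $j \geq 0$. To finish, I would verify the crude inequality $(\beta-1)(\alpha + a\beta) \leq \beta^2(\alpha + a(1+\beta)) + \beta$, so that the stated bound on $B$ is indeed sufficient.

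The entire proof is bookkeeping; the only conceptual ingredient is Bezout's Lemma, so no step presents a genuine obstacle. Both the range $[n, n + \beta(\beta+1))$ and the bound $\beta^2(\alpha + a(1+\beta)) + \beta$ in the statement are quite loose — in fact the range $[n, n+\beta/g')$ with $B$ of order $\beta(\alpha + a\beta)$ already works — presumably chosen for convenience in the applications that follow.
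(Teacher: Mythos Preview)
Your argument is correct and in fact tighter than the paper's. The paper proceeds iteratively: it fixes a single Bezout solution $x\alpha - y\beta = g'$ (with $0 < x \leq 1+\beta$, $0 \leq y \leq \alpha$), observes that the substitution $(n,j) \mapsto (n+x,\,j-y)$ increases the value $n\alpha+j\beta$ by exactly $g'$ while decreasing the ``excess'' $j-(an+b)$ by $ax+y$, and then applies this step up to $\beta/g'$ times to fill in the multiples of $g'$ lying in each interval $[n\alpha+j\beta,\,n\alpha+(j+1)\beta)$. The range $i < n + \frac{\beta}{g'}x \leq n + \beta(1+\beta)$ and the bound $B \leq \beta(\frac{\beta}{g'}(ax+y)+1) \leq \beta^2(\alpha+a(1+\beta))+\beta$ then fall out of these repeated steps.

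Your route is more direct: rather than stepping, you solve the congruence $m \equiv i_0\alpha \pmod{\beta}$ once, using the fact that $\alpha$ has order $\beta/g'$ in $\mathbb{Z}/\beta\mathbb{Z}$ to land $i_0$ in $[n,\,n+\beta/g')$. This immediately gives the sharper range $i_0 < n + \beta/g' \leq n+\beta$ and the sharper sufficient bound $B \geq (\beta-1)(\alpha + a\beta)$, both of which you correctly note are well inside the statement's looser constants. The paper's iterative phrasing is perhaps more suggestive of the ``filling in gaps'' picture, but your residue-class argument is cleaner and yields better constants with less bookkeeping.
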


If $\alpha$ and $\beta$ are fixed (and $j$ and $n$ are allowed to vary) then it is a familiar fact that the set of integers of the form $n \alpha + j \beta$ is precisely the multiples of $g'$. Our statement is essentially just a more complicated version of this.

\begin{proof} First consider the set $S_n$. If an integer $j$ satisfies $j > an+b$ (so that $n \alpha + j \beta$ lies in $S_n$), then increasing the parameter $j$ certainly does not violate this condition. Therefore, adding a positive multiple of $\beta$ to an element of $S_n$ yields another element of $S_n$. In particular, $S_n$ already contains all integers which are obtained by increasing $\min(S_n)$ by a multiple of $\beta$. These values are by construction linear combinations of $\alpha$ and $\beta$, so they are all multiples of $g'$.

It remains, then, to show that by increasing $n$ `just a little', we can `fill in' the intermediate multiples of $g'$. We will do so by `giving ourselves enough room', in the sense of an ad-hoc quantity which we now define. Define the \emph{excess} of $(j,n)$ to be $j - (an+b)$. The condition $j > an+b$ is then equivalent to $(j,n)$ having positive excess.

By Bezout's Lemma, let $x > 0 $ and $y \geq 0$ be the solution of $x \alpha - y \beta = g'$ with smallest non-negative $y$. We have $0 < x \leq \frac{g'}{\alpha}+ \beta$ and $0 \leq y \leq \alpha$. Given an expression $n \alpha + j \beta$, replacing $n$ by $n+x$ and $j$ by $j-y$ increases the value of the linear combination $n \alpha + j \beta$ by $g'$, and reduces the excess by the constant $ax+y$. We will use this to fill in the remaining multiples of $g'$.

Let $j_0$ realise the smallest member of $S_n$, in the sense that $\min(S_n) = n \alpha + j_0 \beta$. Now take any $j \geq j_0 + \frac{\beta}{g'}(ax+y)$. The excess of $(n,j_0)$ was positive, so the excess of $(n,j)$ is greater than $\frac{\beta}{g'}(ax+y)$. We may therefore add $(x,-y)$ to $(n,j)$ up to $\frac{\beta}{g'}$ times while retaining a positive excess (and keeping $j$ non-negative). This shows that all multiples of $g'$ lying between $n \alpha + j \beta$ and $n \alpha + (j+1) \beta$ are contained in $S_i$ for some $i$ satisfying $n \leq i < n+ \frac{\beta}{g'} x$, and we may perform this procedure for any $j \geq j_0 + \frac{\beta}{g'}(ax+y)$. In particular, all multiples of $g'$ which are at least $\min(S_n)+\beta(\frac{\beta}{g'}(ax+y)+1)$ are contained in $S_i$ for some $i$ satisfying $n \leq i < n+ \frac{\beta}{g'} x$. The extra $+1$ here is because $j$ must be an integer. This is essentially the result, and it remains only to establish that we may take the constants as in the statement.

Now, $\frac{g'}{\alpha} \leq 1$, so $x \leq 1 + \beta$, and $\frac{\beta}{g'}x \leq \beta x \leq \beta (1+ \beta)$. This establishes the bounds on $i$. The bound on $B$ follows from these inequalities, together with $y \leq \alpha$. This completes the proof. \end{proof}

We now prove the following strong version of Theorem \ref{secondKTheoremw}.
\begin{theorem} \label{secondKTheorem} Let $p$ be an odd prime, and let $X$ be a path connected space having the $p$-local homotopy type of a finite $CW$-complex. Suppose that there exists a map $$\mu : \bigvee_{i=1}^\ell \bigvee_{j=1}^{m_i} S^{q_i+1} \to \Sigma X$$
with $1 \leq q_1 < q_2 < \dots < q_\ell$, such that the map $$\widetilde{K}^*(\Sigma X) \otimes \Zp \xrightarrow{\mu^*} \widetilde{K}^*(\bigvee_{i=1}^\ell \bigvee_{j=1}^{m_i} S^{q_i+1}) \otimes \Zp \cong \bigoplus_{i=1}^\ell \bigoplus_{j=1}^{m_i} \Zp$$
is a surjection.

Then there exist constants $\tau,\theta>0$ such that for multiples $M=mg'$ of $$g'=\gcd(q_1, \dots q_\ell,2(p-1))$$ we have $$\sum_{t=1}^\infty \mathrm{rank}_{\Zpt}(\pi_{M}(\Sigma X))\geq \frac{\tau}{\frac{1}{g}\frac{\mathrm{conn}(X)+1}{\dim(X)+1}M + \theta} \varphi^{(\frac{\mathrm{conn}(X)+1}{\dim(X)+1})M}-o(\frac{1}{M}\varphi^{(\frac{\mathrm{conn}(X)+1}{\dim(X)+1})M}),$$ where $\varphi$ is the unique positive real root of the degree $q_\ell$ polynomial $$z^{q_\ell} - \sum_{i=1}^\ell m_i z^{q_\ell - q_i} = 0,$$ (in particular, $\varphi \geq (\sum_{i=0}^\ell m_i)^{\frac{1}{q_\ell}} = (\sum_{i=0}^\ell m_i)^{\frac{1}{\max(q_1, \dots q_\ell)}}$), $\mathrm{conn}(X)$ is the $p$-local connectivity of $X$, $\dim(X)$ is the rational cohomological dimension of $X$, and $g=\gcd(q_1, \dots , q_\ell)$. \end{theorem}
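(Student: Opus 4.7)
The plan is to feed Theorem~\ref{basicKTheorem} into the asymptotic estimate of Theorem~\ref{withoutDifferentials}, and then to use Lemma~\ref{BigBezout} to parametrise the valid pairs $(N,j)$ by a single index $M$. Applying Theorem~\ref{basicKTheorem} together with the degree shift $\pi_{k-1}(\Omega \Sigma X) \cong \pi_k(\Sigma X)$, we see that for any $(N,j)$ satisfying Condition~(\ref{eqn}) we have
\[
\sum_{t=1}^\infty \mathrm{rank}_{\Zpt}\bigl(\pi_{N+2j(p-1)}(\Sigma X)\bigr) \;\geq\; \mathrm{rank}_{\Zpr}(L_N \otimes \Zpr),
\]
where $L$ is the free Lie algebra on one generator in each degree $q_i$ for each sphere of the wedge. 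Theorem~\ref{withoutDifferentials} shows that the right-hand side vanishes if $g \nmid N$ and is at least $\tfrac{g}{N}\varphi^N - o(\tfrac{1}{N}\varphi^N)$ when $g \mid N$.

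Given $M = mg'$, the problem reduces to realising $M = N + 2(p-1)j$ with $g \mid N$ and with $j$ satisfying Condition~(\ref{eqn}), pushing $N$ as close as possible to $\tfrac{\mathrm{conn}(X)+1}{\dim(X)+1}M$, since this is the choice that maximises $\varphi^N$ under the constraint. Writing $N = gn$, Condition~(\ref{eqn}) takes the linear form $j > an + b$ with $a = \tfrac{g}{2(p-1)}\bigl(\tfrac{\dim(X)+1}{\mathrm{conn}(X)+1} - 1\bigr) > 0$ and $b$ an explicit constant, and a direct computation yields
\[
\min(S_n) \;=\; gn + 2(p-1)\lceil an+b+1 \rceil \;=\; n \cdot g \cdot \tfrac{\dim(X)+1}{\mathrm{conn}(X)+1} + O(1).
\]
Applying Lemma~\ref{BigBezout} with $\alpha = g$ and $\beta = 2(p-1)$ (so that $\gcd(\alpha,\beta)=g'$) then guarantees that once $M$ is sufficiently large, one can write $M = gi + 2(p-1)j$ with $(i,j)$ obeying~(\ref{eqn}) and with $i$ within a universal constant of the ``ideal'' value $n \approx \tfrac{1}{g}\tfrac{\mathrm{conn}(X)+1}{\dim(X)+1}M$. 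Equivalently, $N=gi$ differs from $\tfrac{\mathrm{conn}(X)+1}{\dim(X)+1}M$ by $O(1)$.

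Plugging this $(N,j)$ into the inequality of the first paragraph yields the desired bound: $\tfrac{g}{N}$ becomes $\bigl(\tfrac{1}{g}\tfrac{\mathrm{conn}(X)+1}{\dim(X)+1}M + O(1)\bigr)^{-1}$, while $\varphi^N = \varphi^{\frac{\mathrm{conn}(X)+1}{\dim(X)+1}M} \cdot \varphi^{O(1)}$, and the error from Theorem~\ref{withoutDifferentials} is $o(\tfrac{1}{M}\varphi^{\frac{\mathrm{conn}(X)+1}{\dim(X)+1}M})$. Absorbing the bounded factor $\varphi^{O(1)}$ into a constant $\tau$ and the $O(1)$ shift in the denominator into $\theta$ produces the stated form. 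The main obstacle is the Bezout bookkeeping in the second paragraph: one must verify that the shift provided by Lemma~\ref{BigBezout} is genuinely uniform in $M$, and track how it interacts with the two places where $N$ appears (both in the exponent and in the denominator $\tfrac{g}{N}$), so as to produce concrete constants $\tau$ and $\theta$. This is mechanical but technical, and is precisely what forces the slightly unusual form of the constants in the theorem statement.
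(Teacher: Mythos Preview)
Your proposal is correct and follows essentially the same route as the paper: invoke Theorem~\ref{basicKTheorem} (with the loop--suspension degree shift), write $N=gn$ so that Condition~(\ref{eqn}) becomes $j>an+b$ with $\alpha=g$, $\beta=2(p-1)$, apply Lemma~\ref{BigBezout} to realise each large multiple $M$ of $g'$ as $gi+2(p-1)j$ with $i$ within a bounded window of the optimal $n\approx\tfrac{1}{g}\tfrac{\mathrm{conn}(X)+1}{\dim(X)+1}M$, and then feed $N=gi$ into the estimate of Theorem~\ref{withoutDifferentials}. The paper carries out exactly this argument, only with the constants tracked explicitly (the window is $n\le i<n+8(p-1)^2$, and the resulting $\tau,\theta$ are recorded in Remark~\ref{Constants}) rather than hidden in your $O(1)$'s; one small slip is your formula $\min(S_n)=gn+2(p-1)\lceil an+b+1\rceil$, where the ceiling should read $\lfloor an+b\rfloor+1$, but this is absorbed by the $O(1)$ anyway.
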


\begin{proof}
Let $S_n$ be the set of dimensions $M$ for which Theorem \ref{basicKTheorem} tells us that $\sum_{t=1}^\infty \mathrm{rank}_{\Zpt}(\pi_{M}(\Sigma X)) \geq \dim_{\Zp}(L_{ng} \otimes \Zpr)$. That is: $$S_n = \{ng+j\cdot 2(p-1) \ \mid \ j \in \mathbb{Z}, \ j > an+b \} \subset \mathbb{Z},$$ where $a=\frac{g}{2(p-1)}(\frac{\dim(X)+1}{\mathrm{conn}(X)+1}-1)$ and $b = \frac{1}{2(p-1)}(\frac{\dim(X)+1}{\mathrm{conn}(X)+1}(\mathrm{conn}(X)+2)+1)$.

By Lemma \ref{BigBezout}, there exists a constant $B$, which may be taken to be $4(p-1)^2 (g + a (1 + 2(p-1)))+ 2(p-1)$ such that for each $M=mg' \geq \min(S_n) + B$, we have $\sum_{t=1}^\infty \mathrm{rank}_{\Zpt}(\pi_{M}(\Sigma X)) \geq \dim_{\Zp}(L_{ig} \otimes \Zpr)$ for some $i$ with $n \leq i < n + 8(p-1)^2$. By Theorem \ref{withoutDifferentials}, $$\sum_{t=1}^\infty \mathrm{rank}_{\Zpt}(\pi_{M}(\Sigma X)) \geq \frac{1}{i} \varphi^{ig} - q_{\ell}|\psi|^{ig}-g \varphi^{\frac{ig}{2}}-q_{\ell}|\psi|^{\frac{ig}{2}}.$$

Regardless of whether $|\psi| > 1$, we have $|\psi|^{\frac{ig}{2}} < 1 + |\psi|^{ig} < 2 + |\psi|^{(n+8(p-1)^2)g}$, so the inequality implies 
\begin{equation}\sum_{t=1}^\infty \mathrm{rank}_{\Zpt}(\pi_{M}(\Sigma X)) \geq \frac{1}{n+8(p-1)^2} \varphi^{ng}-g \varphi^{\frac{(n+8(p-1)^2)g}{2}} - q_{\ell}(3+2|\psi|^{(n+8(p-1)^2)g}).
    \label{ineq} \tag{$\dagger$}
\end{equation}
It remains only to find the dependency of $n$ upon $M$, and convert this expression into one in terms of $M$.

The smallest member of $S_n$ is obtained by taking the smallest $j=j_n$ satisfying Condition (\ref{eqn}). By definition $j_n$ is the smallest integer with $j_n > an+b$, so $j_n \leq an+b + 1$. Thus, $$\min(S_n)=ng+2j_n(p-1) \leq g(\frac{\dim(X)+1}{\mathrm{conn}(X)+1})n+2(p-1)(b+1).$$

To conclude, for given $M=mg'$, let $n=n(M)$ be the largest non-negative integer satisfying $M \geq g(\frac{\dim(X)+1}{\mathrm{conn}(X)+1})n+2(p-1)(b+1)+B$. Rearranging gives $n \leq \frac{M-(2(p-1)(b+1)+B)}{g} (\frac{\mathrm{conn}(X)+1}{\dim(X)+1})$. Since $n$ is the largest such integer, it is at least one less than this expression. Applying the bounds $n \leq i < n + 8(p-1)^2$, now gives that 
$$\frac{1}{n+8(p-1)^2} \varphi^{ng} \geq \frac{\tau}{\frac{1}{g}\frac{\mathrm{conn}(X)+1}{\dim(X)+1}M + \theta} \varphi^{\frac{\mathrm{conn}(X)+1}{\dim(X)+1}M},$$ for constants $\theta$ and $\tau$, and shows that the other terms of the inequality \ref{ineq} are $o(\frac{1}{M}\varphi^{\frac{\mathrm{conn}(X)+1}{\dim(X)+1}})$, as required. \end{proof}

\begin{remark} \label{Constants} 
In this remark we give the constants and error term for Theorem \ref{secondKTheorem}, and collect the other constants appearing in the proof.

The positive integers $q_1, \dots q_\ell$ are given in the hypotheses of Theorem \ref{secondKTheorem} (or Theorem \ref{withoutDifferentials}). Then $$g=\gcd(q_1, \dots , q_\ell),$$ and $$g' = \gcd(q_1, \dots q_\ell, 2(p-1)) = \gcd(g, 2(p-1)).$$

The space $X$ and the prime $p \neq 2$ are given in the hypotheses of Theorem \ref{secondKTheorem}, $\dim(X)$ is the rational dimension of $X$, and $\mathrm{conn}(X)$ is its $p$-local connectivity.

The constants appearing in the proof of Theorem \ref{secondKTheorem} are then
$$a=\frac{g}{2(p-1)}(\frac{\dim(X)+1}{\mathrm{conn}(X)+1}-1),$$  $$b = \frac{1}{2(p-1)}(\frac{\dim(X)+1}{\mathrm{conn}(X)+1}(\mathrm{conn}(X)+2)+1) \textrm{, and}$$ $$B=4(p-1)^2 (g + a (1 + 2(p-1))) + 2(p-1).$$

It then follows from the proof that the constants $\theta$ and $\tau$ of Theorem \ref{secondKTheorem} may be taken as follows.
$$\theta = 8(p-1)^2 - (\frac{\mathrm{conn}(X)+1}{\dim(X)+1})\frac{2(p-1)(b+1+B)}{g} \leq 8(p-1)^2 \textrm{, and}$$ $$\tau = \varphi^{-g-(\frac{\mathrm{conn}(X)+1}{\dim(X)+1})(2(p-1)(b+1)+B)},$$ where, as usual, $\varphi$ is the unique positive root of the polynomial $P(z) = z^{q_\ell} - \sum_{i=1}^\ell m_i z^{q_\ell - q_i} = 0.$ 

The error term in the bound Theorem \ref{secondKTheorem} is an unpleasant expression, and we restrict ourselves to noting that it is negative, and of the form $$-c_1 \varphi^{\frac{1}{2}\frac{\mathrm{conn}(X)+1}{\dim(X)+1} M} - c_2 |\psi|^{\frac{\mathrm{conn}(X)+1}{\dim(X)+1}M} - 3 q_\ell,$$ for constants $c_i$, where $\psi$ is the next largest root of $P(z)$ after $\varphi$, in absolute value. The deviation of the bound from being a pleasant expression is therefore exponential in bases determined by the roots of $P(z)$.
\end{remark}



\bibliographystyle{amsalpha}

\end{sloppypar}
\end{document}